\newtheorem{thm}{Theorem}[section]
\newtheorem{theorem}[thm]{Theorem}
\newtheorem{lem}[thm]{Lemma}
\newtheorem{prop}[thm]{Proposition}
\newtheorem{cor}[thm]{Corollary}
\newenvironment{proofofthm}[1]{\noindent{\it{Proof of Theorem \ref{#1}.}}}{$\qed$}
\theoremstyle{definition}
\newtheorem{definition}[thm]{Definition}
\newtheorem{notation}[thm]{Notation}
\newenvironment{customthm}[1]
  {\innercustomthm}
  {\endinnercustomthm}
\theoremstyle{remark}
\newtheorem{rem}{Remark}
\newtheorem*{prop proof}{Proof of Proposition}
\newcommand{\RR}{\mathbb{R}}      
\newcommand{\ZZ}{\mathbb{Z}}        
\newcommand{\EE}{\mathbb{E}}      
\newcommand{\lk}{{\rm{Lk}}}
\numberwithin{equation}{section}
\title{Almost Hyperbolic Groups with Almost Finitely Presented Subgroups}
\author{Robert Kropholler}
\begin{document}
\maketitle
\begin{abstract}
	We construct new examples of CAT(0) groups containing non finitely presented subgroups that are of type $FP_2$, these CAT(0) groups do not contain copies of $\ZZ^3$. We also give a construction of groups which are of type $F_n$ but not $F_{n+1}$ with no free abelian subgroups of rank greater than $\lceil\frac{n}{3}\rceil$. 
\end{abstract}

\section{Introduction}

Subgroups of CAT(0) groups can be shown to exhibit interesting finiteness properties. In \cite{bestvina_morse_1997} there are examples of groups satisfying $F_n$ but not $F_{n+1}$ for all $n$ as well as the first examples of groups of type $FP_2$ that are not finitely presented answering a question of Brown. 

\begin{definition}
A group $G$ satisfies {\em property $F_n$}, if there is a classifying space $K(G,1)$ with finite $n$ skeleton.
\end{definition}

\begin{definition}
A group $G$ is of {\em type $FP_n$} if there is a partial resolution of the trivial $\ZZ G$ module, 

\begin{figure*}[h]
\center
\begin{tikzpicture}[node distance=1.2cm, auto]
\node (Z) {$\ZZ$};
\node (P_0) [left of=Z] {$P_0$};
\node (P_1) [left of=P_0] {$P_1$};
\node (dots) [left of=P_1] {$\dots$};
\node (P_n) [left of=dots] {$P_n$};
\node (0r) [right of=Z] {$0$}; 

\draw[->,] (P_1) to node {} (P_0);
\draw[->,] (P_0) to node {} (Z);
\draw[->,] (dots) to node {} (P_1);
\draw[->,] (P_n) to node {} (dots);
\draw[<-,] (0r) to node {} (Z);
\end{tikzpicture}
\end{figure*}
where $P_i$ is a finitely generated projective $\ZZ G$ module.
\end{definition}

It can easily be seen that $F_1$ is equivalent to finite generation and $F_2$ is equivalent to finite presentability. In \cite{bestvina_morse_1997} the groups are kernels of maps $\phi:G\to \ZZ$ where $G$ is a right angled Artin group, the finiteness properties of the kernel depend solely on the defining flag complex for the right angled Artin group. 

The construction of groups of type $FP_2$ that are not finitely presented are contained in right angled Artin groups having free abelian subgroups of rank 3. Since \cite{bestvina_morse_1997} there have been other constructions of groups with subgroups of type $FP_2$ not $F_2$ see \cite{bux_bestvina-brady_1999other, leary_uncountably_2015}. In many cases these groups are subgroups of groups $G$ of type $F$, in these cases the maximal rank of a free abelian subgroup of $G$ is at least 3. We construct the first examples of groups of type $F$ containing no free abelian subgroups of rank 3 containing a subgroup of type $FP_2$ that is not $F_2$. 

\begin{customthm}{A}
There exists a non positively curved space $X$ such that $\pi_1(X)$ contains no subgroups isomorphic to $\ZZ^3$, which contains a subgroup of type $FP_2$ not $F_2$. 
\end{customthm}

The groups constructed in \cite{bestvina_morse_1997} which are of type $F_{n-1}$ not $F_n$ all contain a free abelian subgroup of rank $n-1$. Brady asked in \cite[Question 8.5]{bestvina_problem} whether there exist groups of type $F_{n-1}$ but not $F_{n}$ which do not contain $\ZZ^2$; he notes that the known examples all contain $\ZZ^{n-2}$. While we are not able to find examples without $\ZZ^2$, we reduce the bound to a fraction of $n$.  

\begin{customthm}{B}
For every positive integer $n$, there exists a group of type $F_{n-1}$ but not $F_{n}$ that contains no abelian subgroups of rank greater than $\lceil \frac{n}{3}\rceil$.
\end{customthm}

In the future, we would like to extend these theorems, giving examples of hyperbolic groups with such subgroups.

The author would like to thank Federico Vigolo for kindly helping draw several of the figures contained within. The author would also like to thank Martin Bridson and Gareth Wilkes for reading earlier drafts of this paper and providing helpful and constructive comments.

\section{Preliminaries}
\subsection{Cube Complexes}

A cube complex can be constructed by taking a collection of disjoint cubes and gluing them together by isometries of their faces.

There is a standard way in which cube complexes can be endowed with metrics. There is a well known criterion of \cite{chern_hyperbolic_1987} that characterises those cube complexes that are locally CAT(0).

The precise definition of a cube complex is given in \cite[Def. 7.32]{bridson_metric_1999} as follows.

\begin{definition}
A {\em cube complex} $X$ is a quotient of a disjoint union of cubes $K = \bigsqcup_{C}[0,1]^{n_c}$ by an equivalence relation $\sim$. The restrictions $\chi_c:[0,1]^{n_c}\to X$ of the natural projection $\chi:K\to X = K/\sim$ are required to satisfy:
\begin{itemize}
\item for every $c\in C$ the map $\chi_c$ is injective;
\item if $\chi_c([0,1]^{n_c})\cap \chi_{c'}([0,1]^{n_{c'}})\neq \emptyset$, then there is an isometry $h_{c,c'}$ from a face $T_c\subset [0,1]^{n_c}$ onto a face $T_{c'}\subset [0,1]^{n_c'}$ such that $\chi_c(x) = \chi_{c'}(x')$ if and only if $x' = h_{c,c'}(x)$.
\end{itemize}
\end{definition}

\begin{definition}
A metric space is {\em non-positively curved} if its metric is locally CAT(0)
\end{definition}

Gromov's insight allows us to easily check whether a $P\EE$ complex is non-positively curved. 

\begin{lem}[Gromov, \cite{chern_hyperbolic_1987}, p. 120]
A $P\EE$ complex is non-positively curved if and only if the link of each vertex is a CAT(1) space. 
\end{lem}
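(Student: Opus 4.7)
The plan is to combine the \emph{cone structure} of a P$\EE$ complex near a vertex with Berestovskii's theorem that the Euclidean cone $C_0 Y$ on a metric space $Y$ is CAT(0) if and only if $Y$ is CAT(1). The first observation is that for any vertex $v \in X$ there is some $\varepsilon > 0$ such that the closed ball $\overline{B}(v,\varepsilon)$ is isometric to the $\varepsilon$-ball around the cone point in $C_0\lk(v)$. Indeed, each cell incident to $v$, radially parametrized from $v$, is isometric to a truncated metric cone on its corresponding spherical simplex in $\lk(v)$, and the gluings on the two sides agree by the construction of the link.

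With this in hand the vertex case is easy in both directions. If $X$ is locally CAT(0), then $\overline{B}(v,\varepsilon)$ is CAT(0), hence so is a ball about the cone point in $C_0\lk(v)$, whence $\lk(v)$ is CAT(1) by the ``only if'' of Berestovskii. Conversely, if $\lk(v)$ is CAT(1), Berestovskii gives that $C_0\lk(v)$ is CAT(0), so $\overline{B}(v,\varepsilon)$ is CAT(0), providing the required CAT(0) neighborhood of $v$.

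It remains to extend local CAT(0)-ness from vertices to arbitrary points $x$. If $x$ lies in the interior of a cell $\sigma$ of dimension $k \ge 1$, a small neighborhood of $x$ splits isometrically as a product of an open subset of $\RR^k$ (directions along $\sigma$) and an open Euclidean cone on $\lk(\sigma)$ (directions transverse to $\sigma$). Identifying $\lk(\sigma)$ with the link, inside the spherical complex $\lk(v)$, of the simplex corresponding to $\sigma$ for any vertex $v$ of $\sigma$, a second application of the theorem in the spherical setting shows that $\lk(\sigma)$ inherits the CAT(1) condition from $\lk(v)$. The cone is then CAT(0), and a product of a Euclidean piece with a CAT(0) space is CAT(0), so $x$ has a CAT(0) neighborhood.

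The main obstacle is Berestovskii's theorem itself, which I would take as a black box and which is really doing all of the geometric work. Its proof relies on an Alexandrov patchwork argument: any geodesic triangle in $C_0 Y$ is subdivided into triangles that either avoid the cone point (and lie in a region where the metric is essentially Euclidean) or pass through it, and for the latter the CAT(0) comparison reduces, via the Euclidean and spherical laws of cosines, to a CAT(1) comparison for a triangle in $Y$. The (spherical) version needed for propagating CAT(1) through the links in the paragraph above is proved in exactly the same way, simply replacing the Euclidean cone by the spherical cone. Once this is available, the argument above is just bookkeeping on how the local metric of a P$\EE$ complex records cones on links.
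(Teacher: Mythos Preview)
The paper does not give its own proof of this lemma; it is simply quoted, with attribution to Gromov \cite{chern_hyperbolic_1987}, as background for the combinatorial link criterion that follows. So there is nothing in the paper to compare your argument against.

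That said, your sketch is essentially the standard proof one finds in the literature (for instance in \cite{bridson_metric_1999}, II.5.2 together with Berestovskii's theorem II.3.14). The cone-on-link description of a neighborhood of a vertex, the appeal to Berestovskii, and the product decomposition at an interior point of a cell are exactly the ingredients used there. Your remark that the passage from vertex links to links of higher-dimensional cells invokes the spherical analogue of the same statement, and so implicitly an induction on dimension, is correct and is also how the standard argument proceeds. One small caveat: for general $P\EE$ complexes (arbitrary Euclidean cells, not just cubes) one also needs to know that small metric balls are convex and that the complex is a geodesic space, which requires some control on the shapes of cells (finitely many isometry types, or Bridson's more general conditions); you have implicitly assumed this when writing ``the closed ball $\overline{B}(v,\varepsilon)$ is isometric to the $\varepsilon$-ball around the cone point''. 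In the setting of the paper, where all cells are unit cubes, this is automatic.
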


In a cube complex the link of each vertex is a spherical complex built from all-right speherical simplices (see \cite[\S5.18]{bridson_metric_1999}). Gromov realised that an all-right spherical complex is CAT(1) if and only if it is flag.

\begin{definition}
A complex $L$ is a {\em flag complex} if it is simplicial and every set $\{v_1, \dots, v_n\}$ of pairwise adjacent vertices spans a simplex. 
\end{definition}

Flag complexes are completely determined by their $1$-skeleta. 

Thus we arrive at the following combinatorial condition for cube complexes.

\begin{lem}[Gromov, \cite{chern_hyperbolic_1987}]
A cube complex is non-positively curved if the link of every vertex is a flag complex. 
\end{lem}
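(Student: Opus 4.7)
The plan is to deduce this statement by chaining together two results that are already available in the excerpt: the earlier lemma that a $P\EE$ complex is non-positively curved exactly when every vertex link is CAT(1), and Gromov's remark that an all-right spherical complex is CAT(1) if and only if it is flag. So the real content of the proof is the local geometric identification of vertex links in a cube complex with all-right spherical complexes, together with Gromov's CAT(1)--flag criterion.

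First I would unpack the link construction. Given a vertex $v$ of a cube complex $X$, each cube $\chi_c([0,1]^{n_c})$ containing $v$ contributes to $\lk(v,X)$ a spherical simplex: namely, the intersection of a small sphere around $v$ with that cube. Because the cube is a unit Euclidean cube and $v$ is a corner, this intersection is isometric to an all-right spherical simplex of dimension $n_c - 1$ (every edge has length $\pi/2$). The gluing data on the cubes descends to a gluing of these spherical simplices along faces, so $\lk(v,X)$ is an all-right spherical complex. The flag-complex assumption on $\lk(v,X)$ is what we are given.

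Next I would invoke Gromov's criterion: an all-right spherical complex is CAT(1) if and only if it is a flag simplicial complex. This is the substantive piece, and is the step I expect to be the main obstacle if one insists on a self-contained argument; the usual proof proceeds by induction on dimension, using the fact that CAT(1) at scale $\pi$ for a geodesic space of curvature $\le 1$ can be detected by showing there are no closed locally geodesic loops of length less than $2\pi$, combined with a link-based inductive step and a careful analysis of how geodesics enter and leave top-dimensional simplices. In the present exposition this is cited rather than reproved, so I would simply apply it: our link $\lk(v,X)$ is flag, hence CAT(1).

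Finally I would combine these: by the earlier lemma of Gromov, $X$ is non-positively curved if and only if each vertex link is CAT(1); and by the previous step, each vertex link is CAT(1) because it is a flag all-right spherical complex. Therefore $X$ is non-positively curved, which is precisely the conclusion. The argument is essentially a two-line chase once the correct identifications are in place, and the only nontrivial ingredient is the flag/CAT(1) equivalence that the paper imports from Gromov.
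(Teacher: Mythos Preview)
Your proposal is correct and follows exactly the line of reasoning the paper lays out in the surrounding text: identify vertex links as all-right spherical complexes, apply Gromov's flag/CAT(1) equivalence, and conclude via the earlier lemma that CAT(1) links imply non-positive curvature. The paper does not give a separate formal proof of this lemma beyond that discussion, so there is nothing further to compare.
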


We wish to limit the rank of free abelian subgroups, in fact we will limit the largest dimension of an isometrically embedded flat. The following theorem shows that in the CAT(0) world the maximal dimension of a flat is the same as the maximal rank of a free abelian subgroup. 

\begin{thm}[Bridson-Haefliger, \cite{bridson_metric_1999}, II.7]\label{flattorus}
Let $X$ be a compact non positively curved cube complex, $\tilde{X}$ its universal cover and $G = \pi_1(X)$. If $H = \ZZ^n$ is a subgroup of $G$, then there is an isometrically embedded copy of $i:\RR^n\hookrightarrow\tilde{X}$. Moreover, the quotient $i(\RR^n)/H$ is an $n$-torus
\end{thm}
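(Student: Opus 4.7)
The plan is to reduce the statement to the classical Flat Torus Theorem of Bridson and Haefliger (II.7.1) by checking its hypotheses for the action of $H$ on $\tilde{X}$. The result has nothing particularly cubical about it once we know $\tilde{X}$ is CAT(0); the cube structure is only used at the very start to establish that.

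First I would set up the ambient CAT(0) geometry. By Gromov's link criterion (already quoted above), the piecewise-Euclidean cube complex $X$ is locally CAT(0), so the Cartan--Hadamard theorem for non-positively curved spaces implies that the universal cover $\tilde{X}$, equipped with the lifted piecewise-Euclidean metric, is a complete CAT(0) space. The deck group $G=\pi_1(X)$ acts on $\tilde{X}$ freely, properly discontinuously, cocompactly, and by cellular isometries.

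Second I would verify that every nontrivial element of $H\cong\ZZ^n$ acts by a hyperbolic (in particular semisimple) isometry. Because the $G$-action is proper and cocompact, every $g\in G$ realises its translation length on a nonempty closed convex subset $\mathrm{Min}(g)\subset\tilde{X}$; this is the standard semisimplicity result for cocompact isometry groups (Bridson--Haefliger II.6.10). Freeness of the action rules out fixed points, so nontrivial elements of $G$, and hence of $H$, must be hyperbolic.

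Third, I would apply the Flat Torus Theorem directly to $H$. For a finitely generated free abelian group acting by hyperbolic isometries on a CAT(0) space, the common minimal set $\mathrm{Min}(H)=\bigcap_{h\in H}\mathrm{Min}(h)$ is non-empty, splits isometrically as a product $Y\times\RR^n$, and $H$ preserves the splitting, acting trivially on the $Y$ factor and as a cocompact lattice of translations on the $\RR^n$ factor. Choosing any $y\in Y$, the composition $i:\RR^n\cong\{y\}\times\RR^n\hookrightarrow\mathrm{Min}(H)\hookrightarrow\tilde{X}$ is the isometric embedding demanded by the theorem, and the quotient $i(\RR^n)/H$ is a flat $n$-torus.

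The main obstacle is packaged inside the two black boxes we invoke: the semisimplicity of the $G$-action and the product decomposition of $\mathrm{Min}(H)$. Both are non-trivial results of Chapter II of Bridson--Haefliger, proved by an energy/minimisation argument on the displacement function and an analysis of the axes of commuting hyperbolic isometries. Once these are granted, the statement follows without further work.
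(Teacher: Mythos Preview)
Your proposal is a correct outline of the standard proof, reducing to the Flat Torus Theorem via Cartan--Hadamard and the semisimplicity of cocompact proper actions (Bridson--Haefliger II.6.10 and II.7.1). Note, however, that the paper does not give its own proof of this statement: it is quoted as a black box from \cite{bridson_metric_1999}, Chapter II.7, so there is nothing to compare against beyond observing that your sketch faithfully reproduces the Bridson--Haefliger argument the citation points to.
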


Finally we would like to know when a cube complex is hyperbolic. Bridson shows that the only obstruction is containing an isometrically embedded flat. 

\begin{thm}[Bridson, \cite{bridson_existence_1995}, Theorem A]\label{flatplane}
Let $X$ be a compact non-positively curved cube complex and $\tilde{X}$ be its universal cover. $\tilde{X}$ is not hyperbolic if and only if there exists an isometric embedding $i:\EE^2\hookrightarrow \tilde{X}$. 
\end{thm}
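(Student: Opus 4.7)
The plan is to prove the two directions separately. The ``if'' direction is essentially immediate: if $i: \EE^2 \hookrightarrow \tilde X$ is an isometric embedding, then for any $\delta > 0$ the image contains a Euclidean equilateral triangle of side length $100\delta$ whose barycenter lies at distance much greater than $\delta$ from the union of any two of its sides. Since the sides of this triangle are also geodesics in $\tilde X$ (isometric embeddings preserve geodesics), this is a geodesic triangle violating $\delta$-thinness, so $\tilde X$ cannot be Gromov hyperbolic.

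For the converse I use a compactness-plus-convexity argument. Assume $\tilde X$ is not hyperbolic. Then for every $n$ there is a geodesic triangle $\Delta_n \subset \tilde X$ with a point $p_n$ on one side $\sigma_n$ whose distance to the union of the other two sides exceeds $n$. Since $X$ is compact, $\tilde X$ is a locally finite (hence proper) CAT(0) cube complex on which $\pi_1(X)$ acts cocompactly, so after translating by elements of $\pi_1(X)$ each $p_n$ may be taken to lie in a fixed compact set $K \subset \tilde X$.

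Properness of $\tilde X$ together with the Arzel\`a--Ascoli theorem then allows passage to a subsequence along which the three sides of $\Delta_n$ converge uniformly on compact sets to three geodesics of $\tilde X$. The side $\sigma_n$ limits to a bi-infinite geodesic line $\gamma$ through a point $p_\infty \in K$, while the two other sides become bi-infinite geodesics or rays whose distance from $p_\infty$ is infinite. From this ``ideal triangle at infinity'' one uses the flat strip theorem and convexity of the CAT(0) metric between asymptotic geodesics to build a flat half-plane in $\tilde X$ bounded by $\gamma$, obtained as the union of the flat strips spanned by sub-rays on one side of $\gamma$ that are asymptotic to the limiting geodesics.

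The final and most delicate step is upgrading this half-plane to a full plane, and I expect this to be the main obstacle. The idea is to replay the compactness argument: translate by $\pi_1(X)$ so that points of $\gamma$ lying deep in the interior of the half-plane are brought back into $K$, and then pass to another subsequential limit to obtain flat regions extending arbitrarily far on the opposite side of $\gamma$. Convexity of the metric forces the resulting union to be a genuinely isometrically embedded copy of $\EE^2$ in $\tilde X$. The subtle issue throughout is ensuring that uniform-on-compacta limits of geodesic triangles genuinely give isometric Euclidean pieces rather than only close approximations; for cube complexes this is controlled by the fact that there are only finitely many isometry types of cells, so the local model into which each limit segment fits is taken from a compact list and isometric embeddings are stable under the limit.
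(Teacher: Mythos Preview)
The paper does not prove this theorem at all; it is quoted as a black-box result from Bridson's paper \cite{bridson_existence_1995} (the Flat Plane Theorem), so there is no ``paper's own proof'' to compare against. What you have written is a reasonable outline of the standard proof as it appears, for instance, in Bridson--Haefliger \cite[III.H, Theorem~1.5]{bridson_metric_1999}, and in broad strokes it matches Bridson's original argument: non-hyperbolicity gives arbitrarily fat geodesic triangles, cocompactness lets one recentre and extract limits, and the CAT(0) comparison inequality forces the limiting configuration to contain a genuine flat.

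That said, your sketch has two soft spots that would need real work to turn into a proof. First, the passage from ``limit of fat triangles'' to ``flat half-plane bounded by $\gamma$'' is not simply an application of the flat strip theorem: that theorem requires two parallel bi-infinite geodesics as input, whereas what you have after the limit is a line $\gamma$ and two other geodesics at infinite distance from $p_\infty$. The actual mechanism uses the CAT(0) comparison to show that large sub-triangles of the $\Delta_n$ are metrically close to Euclidean triangles, so the limit contains arbitrarily large flat discs centred near $p_\infty$, and these nest to give a half-plane. Second, your description of the half-plane-to-plane step is garbled: points of $\gamma$ are on the boundary of the half-plane, not in its interior. The correct move is to translate interior points of the half-plane (far from $\gamma$) back to $K$ and pass to a further limit; in the limit the boundary line has receded to infinity and one is left with a full plane. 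Both of these are standard but non-trivial, and your write-up would need to make them precise before it constitutes a proof.
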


\subsection{Right angled Artin groups}

Right angled Artin groups have been at the centre of a lot of recent study, particularly because of their interesting subgroup structure \cite{wise_structure_2011, agol_virtual_2013, haglund_special_2008, bestvina_morse_1997}. In particular, their subgroups have interesting connections with finiteness properties of groups, as shown in \cite{bestvina_morse_1997}. For completeness, we recall the basic theory of right angled Artin groups.

\begin{definition}
Given a flag complex $\Gamma$, we define the associated {\em right angled Artin group (RAAG)} $A_{\Gamma}$, as the group,
$$A_{\Gamma} = \bigl\langle \Gamma^{(0)}\bigm\vert[v,w] = 1 \mbox{ if }[v,w]\in\Gamma^{(1)}\bigr\rangle.$$
\end{definition}

These groups have non-positively curved cube complexes as classifying spaces, which are unions of tori:

\begin{definition}
Given a flag complex $\Gamma$ the {\em Salvetti complex} $S_{\Gamma}$ is defined as follows. 

For each vertex $v_i$ in $\Gamma$, let $S^1_{v_i} = S^1$ be a copy of the circle cubulated with 1 vertex. For each simplex $\sigma = [v_0, \dots, v_n]$ of $\Gamma$ there is an associated torus $T_{\sigma} = S^1_{v_0}\times\dots\times S^1_{v_n}$. If $\tau <\sigma$, then there is a natural inclusion $T_{\tau}\hookrightarrow T_{\sigma}$. Now define

$$S_{\Gamma} = \left.\coprod_{\sigma<\Gamma} T_{\sigma}\middle/\sim\right.$$
where the equivalence relation $\sim$ is generated by the inclusions $T_{\tau}\hookrightarrow T_{\sigma}$. 
\end{definition}

There is a map $S_{\Gamma}\to (S^1)^{\Gamma^{(0)}}$ sending each circle in $S_{\Gamma}$ to the respective circle in $(S^1)^{\Gamma^{(0)}}$ and extending linearly over cubes; this map is an inclusion of cubical complexes.

It is a standard fact that $S_{\Gamma}$ has fundamental group $A_{\Gamma}$ and is a non-positively curved cube complex. Proofs of the following can be found in \cite{bestvina_morse_1997}.

\begin{lem}
$\pi_1(S_{\Gamma}) = A_{\Gamma}$. 
\end{lem}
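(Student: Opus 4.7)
The plan is to compute $\pi_1(S_\Gamma)$ directly from its cell structure, via the standard presentation $2$-complex argument, and then observe that higher-dimensional cells do not alter the fundamental group.

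First I would note that by construction every torus $T_\sigma$ contains a single $0$-cell, and the equivalence relation $\sim$ identifies all of these $0$-cells to a single vertex $x_0 \in S_\Gamma$. The $1$-skeleton $S_\Gamma^{(1)}$ is therefore a wedge of circles indexed by $\Gamma^{(0)}$, one circle $S^1_v$ for each vertex $v$ of $\Gamma$. Hence $\pi_1(S_\Gamma^{(1)}, x_0)$ is the free group on $\Gamma^{(0)}$, giving exactly the generating set of $A_\Gamma$.

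Next I would examine the $2$-cells. Each simplex $\sigma$ of $\Gamma$ contributes a torus $T_\sigma$, but thanks to the identifications $T_\tau \hookrightarrow T_\sigma$ for $\tau < \sigma$, every $2$-cell of $S_\Gamma$ arises uniquely from a $1$-simplex $[v,w] \in \Gamma^{(1)}$: it is the unique $2$-cell of the square torus $T_{[v,w]} = S^1_v \times S^1_w$ that is not already in its $1$-skeleton. The attaching map of this square traces out the loop $vwv^{-1}w^{-1}$, i.e.\ the commutator relation $[v,w] = 1$. By the standard cellular description of $\pi_1$ (or van Kampen's theorem applied to $S_\Gamma^{(2)}$), we obtain
\[
\pi_1(S_\Gamma^{(2)}, x_0) \;=\; \bigl\langle \Gamma^{(0)} \bigm\vert [v,w]=1 \text{ for } [v,w] \in \Gamma^{(1)} \bigr\rangle \;=\; A_\Gamma.
\]

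Finally, I would note that all cells of dimension $\geq 3$ in $S_\Gamma$ come from higher-dimensional tori $T_\sigma$ with $\dim \sigma \geq 2$; attaching cells of dimension $\geq 3$ to a CW complex leaves the fundamental group unchanged. Therefore the inclusion $S_\Gamma^{(2)} \hookrightarrow S_\Gamma$ induces an isomorphism on $\pi_1$, yielding $\pi_1(S_\Gamma) \cong A_\Gamma$. I do not expect any genuine obstacle here; the only point requiring a little care is verifying that the identifications in the definition of $S_\Gamma$ do not collapse or duplicate $2$-cells in a way that would alter the set of commutator relations, which follows immediately from the fact that $\Gamma$ is a simplicial (flag) complex so that each edge $[v,w]$ is determined by its pair of endpoints.
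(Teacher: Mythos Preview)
Your argument is correct and is the standard cellular/van Kampen computation. The paper itself does not give a proof of this lemma but simply cites \cite{bestvina_morse_1997}; for the analogous statement about $K_\Gamma$ the paper just says ``apply the Seifert--van Kampen Theorem repeatedly,'' which is exactly what your $2$-skeleton analysis makes explicit.
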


\begin{lem}
$S_{\Gamma}$ is non-positively curved. 
\end{lem}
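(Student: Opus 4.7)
The plan is to apply Gromov's combinatorial criterion, already recorded above: it suffices to show that the link of every vertex of $S_{\Gamma}$ is a flag simplicial complex. The first step is to observe that because each circle $S^1_{v_i}$ is cubulated with a single vertex, every torus $T_\sigma = S^1_{v_{i_0}}\times\cdots\times S^1_{v_{i_k}}$ has a unique vertex, and the inclusions $T_\tau\hookrightarrow T_\sigma$ used in the construction all send this vertex to the corresponding one in $T_\sigma$. Consequently $S_\Gamma$ has a unique vertex $*$, and the whole problem reduces to identifying $\lk(*)$.

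Next I would compute $\lk(*)$ directly from the torus decomposition. Each circle $S^1_{v}$ contributes two vertices to $\lk(*)$, namely the two ends $v^+,v^-$ of its edge. For a $k$-simplex $\sigma=[v_{i_0},\dots,v_{i_k}]$ of $\Gamma$, the torus $T_\sigma$ is a cube complex all of whose $2^{k+1}$ corners are identified with $*$, and the portion of $\lk(*)$ coming from $T_\sigma$ is the spherical join $\{v_{i_0}^+,v_{i_0}^-\}*\cdots*\{v_{i_k}^+,v_{i_k}^-\}$, that is, the boundary of a $(k+1)$-dimensional orthoplex (all-right spherical). Because the gluings $T_\tau\hookrightarrow T_\sigma$ are coordinate inclusions, these orthoplex boundaries are assembled compatibly along faces. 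Thus $\lk(*)$ is the simplicial complex $O(\Gamma)$ with vertex set $\{v^+,v^-:v\in\Gamma^{(0)}\}$, in which vertices $v_{i_0}^{\epsilon_0},\dots,v_{i_k}^{\epsilon_k}$ span a simplex precisely when the underlying vertices $v_{i_0},\dots,v_{i_k}$ are pairwise distinct and span a simplex of $\Gamma$.

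Finally I would verify that $O(\Gamma)$ is flag. Suppose $v_{i_0}^{\epsilon_0},\dots,v_{i_k}^{\epsilon_k}$ are pairwise adjacent in $O(\Gamma)$. Adjacency forbids $v^+$ and $v^-$ from coexisting in an edge, so the underlying vertices $v_{i_j}$ are pairwise distinct, and each pair $\{v_{i_j},v_{i_\ell}\}$ spans an edge of $\Gamma$. Since $\Gamma$ is flag, the set $\{v_{i_0},\dots,v_{i_k}\}$ spans a simplex of $\Gamma$, and therefore $\{v_{i_0}^{\epsilon_0},\dots,v_{i_k}^{\epsilon_k}\}$ spans a simplex of $O(\Gamma)$. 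Gromov's criterion then yields non-positive curvature.

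The only delicate step is the second one: one has to check carefully that the identifications in the definition of $S_\Gamma$ really do glue the orthoplex boundaries coming from each torus $T_\sigma$ together in the expected way, without producing extra simplices or collapsing any. Once one observes that each $T_\tau\hookrightarrow T_\sigma$ is a coordinate-wise inclusion of the form $(x_0,\dots,x_j)\mapsto(x_0,\dots,x_j,*,\dots,*)$, this compatibility is automatic, and the rest is essentially the same combinatorial verification that Davis's Salvetti complexes carry a flag link.
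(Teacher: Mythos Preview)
Your argument is correct and is precisely the standard one: the paper does not give its own proof here but simply cites \cite{bestvina_morse_1997}, where exactly this computation (the link of the unique vertex is the octahedralisation of $\Gamma$, which is flag because $\Gamma$ is) is carried out. The paper even revisits this later, defining the octahedralisation $S(L)$ and remarking that it is the link of the vertex in the Salvetti complex.
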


\subsubsection{A new classifying space}\label{construct}

We will construct a new classifying space, which will also be a non-positively curved cube complex and will be more amenable to taking branched covers. We will build such a classifying space in the case that the flag complex satisfies the following condition. 

\begin{definition}
A simplicial complex $\Gamma$ has an {\em $n$-partite structure} if $\Gamma$ is contained in the join $V_1\ast\dots\ast V_n$ of several discrete sets $V_i$. In the case $n=2$ we will say {\em bipartite} and in the case $n=3$, {\em tripartite}. 
\end{definition}

Any finite simplicial complex can be given an $n$-partite structure as a subcomplex of a simplex. 

We define a cube complex $K_{\Gamma}$ for an $n$-partite complex $\Gamma$ as follows.

Let $V_i = \{v_1^i, \dots, v_{m_i}^i\}$ and $I = \{1, \dots, n\}$. For each vertex $v_k^i$, let $S^1_{v_k^i}$ be a copy of $S^1$ cubulated with two vertices labelled 0 and 1, and two edges $e_{v_k^i}$ and $e_{v_0^i}$. 

\begin{definition}\label{classpace}
Given $J\in \mathcal{P}(I)$, $J = \{i_1, \dots, i_l\}$, we say that $\sigma$ is a {\em $J$-simplex} if it is of the form $[v_{k_1}^{i_1}, \dots, v_{k_l}^{i_l}]$.
\end{definition}

\begin{rem}
Every simplex is a $J$ simplex for some unique (possibly empty) $J$.
\end{rem}

For each $J$-simplex $\sigma = [v_{k_1}^{i_1}, \dots, v_{k_l}^{i_l}]$ in $\Gamma$ we associate the following space. 
$$T_{\sigma} = \left(\prod_{v\in\sigma}S^1_{v}\right)\times\left(\prod_{i\notin J}e_{v_0^i}\right).$$
This is the product of a torus and a cube. 

Let $\mathcal{T}_J = \{T_{\sigma}:\sigma$ is a $J$-simplex$\}$ and $\mathcal{T} = \coprod_{J\in\mathcal{P}(I)}\mathcal{T}_J$. Given simplices $\tau<\sigma$ there is a natural inclusion $T_{\tau}\hookrightarrow T_{\sigma}$. 
$$K_{\Gamma} = \left.\coprod_{T\in\mathcal{T}}T\middle/\sim\right..$$

Where once again the equivalence relation is generated via the inclusions $T_{\tau}\to T_{\sigma}$. We need to prove the two key lemmata: the fundamental group of $K_{\Gamma}$ is $A_{\Gamma}$, and $K_{\Gamma}$ is non-positively curved. 

\begin{lem}
$\pi_1(K_{\Gamma}) = A_{\Gamma}$.
\end{lem}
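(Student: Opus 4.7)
The strategy is to construct a homotopy equivalence $c : K_\Gamma \to S_\Gamma$; then the lemma follows from the already-established identity $\pi_1(S_\Gamma) = A_\Gamma$.

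I would define $c$ piece by piece. On each building block $T_\sigma = \prod_{v \in \sigma} S^1_v \times \prod_{i \notin J} e_{v_0^i}$ (where $\sigma$ is a $J$-simplex), collapse the cube factor $\prod_{i \notin J} e_{v_0^i}$ to a point, and within each circle factor $S^1_{v_k^i}$ collapse the baseline edge $e_{v_0^i}$ to a point. The remaining edge $e_{v_k^i}$ in each circle becomes the standard generator loop for $v_k^i$, so the image of $T_\sigma$ is the Salvetti torus associated to $\sigma$ in $S_\Gamma$. To see that these local maps assemble into a cellular map on $K_\Gamma$, one checks compatibility with each inclusion $T_\tau \hookrightarrow T_\sigma$ (where $\tau$ is a $J'$-simplex with $J' \subseteq J$): for an index $i \in J \setminus J'$, the cube-factor edge $e_{v_0^i}$ of $T_\tau$ embeds in $T_\sigma$ as the baseline edge of an extra circle factor, and both sides are collapsed to the basepoint of $S_\Gamma$; for $i \notin J$ the cube-factor edges on the two sides agree by definition.

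Next I would show that $c$ is a homotopy equivalence. On each $T_\sigma$ the cube factor is contractible, and within each circle the baseline edge deformation-retracts to a vertex; combining these gives a deformation retraction of $T_\sigma$ onto the Salvetti torus sitting inside it. The plan is to assemble these local retractions into a global deformation retraction by performing elementary collapses of free baseline faces in decreasing dimension: because the gluings $T_\tau \hookrightarrow T_\sigma$ send baseline directions to baseline directions, the baseline subcomplexes of overlapping pieces match, and the collapses are unambiguous. The result is a deformation retraction of $K_\Gamma$ onto a subcomplex isomorphic to $S_\Gamma$, whence $\pi_1(K_\Gamma) \cong \pi_1(S_\Gamma) = A_\Gamma$.

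The main technical point is the bookkeeping required to check that the cell-by-cell collapses are consistent across all the gluings. Once one observes that whenever two pieces $T_\tau$ and $T_\sigma$ overlap, the overlap is itself a product whose baseline part sits the same way in both, this becomes routine rather than a genuine obstruction.
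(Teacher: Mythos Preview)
Your map $c: K_\Gamma \to S_\Gamma$ is well-defined and is indeed a homotopy equivalence, so the overall strategy is sound and in fact proves more than the lemma asks. The gap is in your justification. You claim $c$ is realised by ``elementary collapses of free baseline faces'' yielding a ``deformation retraction of $K_\Gamma$ onto a subcomplex isomorphic to $S_\Gamma$''; neither assertion holds. Already when $n=1$ and $|V_1|\geq 2$, the complex $K_\Gamma$ is the graph $\Theta_1$, whose two vertices each have valence at least $3$, so there are no free faces at all and no elementary collapse is possible; and $S_\Gamma$, a wedge of circles with a single vertex, is not isomorphic to any subcomplex of $\Theta_1$. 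More generally, whenever $\Gamma$ contains an $(n-1)$-simplex $\sigma$ the piece $T_\sigma$ is a closed $n$-torus, which again carries no free faces. The map $c$ is a quotient that identifies the vertices $0$ and $1$ in each coordinate direction; it is not a retraction onto a subcomplex.

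A correct justification runs as follows. For a point $x \in S_\Gamma$ lying in the open cell indexed by a $J$-simplex $\sigma$, the fibre $c^{-1}(x)$ is the cube $\{\text{pt}\} \times \prod_{i \notin J} e_{v_0^i}$ sitting inside $T_\sigma$, hence contractible; one may then invoke a Vietoris-type theorem, or (closer to your own outline) argue by induction over the simplices of $\Gamma$ using the gluing lemma for homotopy equivalences, since each inclusion $T_\tau \hookrightarrow T_\sigma$ is a cofibration and $c$ restricts to a homotopy equivalence on every piece and every intersection. For comparison, the paper's proof is simply ``apply the Seifert--van Kampen Theorem repeatedly'', computing $\pi_1(K_\Gamma)$ directly as the $T_\sigma$ are attached; this is quicker for the stated goal, though it does not give the homotopy equivalence your argument is reaching for.
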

\begin{proof}
Apply the Seifert-van Kampen Theorem repeatedly.
\end{proof}

\begin{lem}
$K_{\Gamma}$ is non-positively curved. 
\end{lem}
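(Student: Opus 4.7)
The plan is to apply Gromov's criterion, so the goal is to show that for every vertex $p$ of $K_\Gamma$ the link $\lk(p)$ is a flag simplicial complex. The first step is to identify the vertex set of $K_\Gamma$. By chasing the face inclusions $T_\tau\hookrightarrow T_\sigma$, every vertex of every $T_\sigma$ is identified with a vertex of the central $n$-cube $T_\emptyset=\prod_{i\in I}e_{v_0^i}$, and no two distinct vertices of $T_\emptyset$ are glued; hence the vertex set of $K_\Gamma$ is canonically $\{0,1\}^I$. Moreover, for each $i$ the circles $S^1_{v_k^i}$ share the common edge $e_{v_0^i}$, so between any two vertices of $K_\Gamma$ that differ in exactly coordinate $i$ there are $m_i+1$ parallel edges $e_{v_0^i},e_{v_1^i},\dots,e_{v_{m_i}^i}$.

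Next I would describe $\lk(p)$ for a fixed vertex $p$. Its $0$-simplices are parametrised by pairs $(i,k)$ with $i\in I$ and $k\in\{0,1,\dots,m_i\}$, one for each edge at $p$. Higher simplices correspond to cubes of $K_\Gamma$ based at $p$, and by construction each such cube is a product of the form $\prod_{s=0}^{d}e_{v_{k_s}^{i_s}}$ sitting inside some $T_\sigma$; the orthogonality of edges in a cube forces the $i_s$ to be pairwise distinct, and, taking $\sigma$ to be minimal, the cube exists in $K_\Gamma$ if and only if the set $\{v_{k_s}^{i_s}:k_s\geq 1\}$ is a simplex of $\Gamma$. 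Note in particular that $(i,k)$ and $(i,l)$ are never joined in $\lk(p)$, so $\lk(p)$ is genuinely a simplicial complex.

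Finally I would verify flagness. Suppose $(i_0,k_0),\dots,(i_d,k_d)$ are pairwise adjacent in $\lk(p)$. Pairwise adjacency forces the $i_s$ to be distinct, and whenever $k_s,k_t\geq 1$ it forces $\{v_{k_s}^{i_s},v_{k_t}^{i_t}\}$ to be an edge of $\Gamma$. The vertices $\{v_{k_s}^{i_s}:k_s\geq 1\}$ are therefore pairwise adjacent in $\Gamma$; since $\Gamma$ is flag, they span a simplex, which supplies the required cube in $K_\Gamma$ and hence the simplex in $\lk(p)$. The main obstacle is the bookkeeping of identifications in the construction of $K_\Gamma$: once one correctly indexes the cubes of $K_\Gamma$ in terms of simplices of $\Gamma$ together with a label in $\{0,1,\dots,m_i\}$ at each unused partition, flagness of $\lk(p)$ is an immediate consequence of flagness of $\Gamma$.
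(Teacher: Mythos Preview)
Your proposal is correct and follows essentially the same route as the paper: both identify the link vertices with $\overline{V_1}\sqcup\cdots\sqcup\overline{V_n}$ (your pairs $(i,k)$), observe that the link sits inside the join $\overline{V_1}\ast\cdots\ast\overline{V_n}$, and then verify flagness by separating a pairwise-adjacent set into the ``$k=0$'' part and the ``$k\geq 1$'' part, using flagness of $\Gamma$ on the latter to produce the simplex $\sigma$ whose associated $T_\sigma$ supplies the missing cube. The only cosmetic difference is that the paper first invokes the vertex-transitive symmetry of $K_\Gamma$ to reduce to the single vertex $\mathbf{0}$, whereas you work at an arbitrary vertex $p$; the link computation is the same in either case.
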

\begin{proof}
There are $2^n$ vertices in $K_{\Gamma}$. Given two vertices $v, w$ there is a cellular isomorphism $K_{\Gamma}\to K_{\Gamma}$ which sends $v$ to $w$. Thus we only need to check the link of one vertex; we will check the link of $\mathbf{0} = (0,\dots, 0)$. Let $L = {\rm Lk}(\mathbf{0}, K_{\Gamma})$. Let $\overline{V_i} = V_i\cup \{v_0^i\}$. 

There is a vertex in $L$ for each edge at $\mathbf{0}$, so $L^{(0)} = \coprod \overline{V_i}$. Two distinct vertices $v_{k_1}^{i_1}$ and $v_{k_2}^{i_2}$ are connected if one of the following conditions holds:
\begin{enumerate}
\item $k_1 = 0, k_2 = 0$,
\item $k_1 = 0, i_1\neq i_2$,
\item $i_1\neq i_2, k_2 = 0$,
\item $[v_{k_1}^{i_1}, v_{k_2}^{i_2}]$ is an edge of $\Gamma$.
\end{enumerate}
These edges come from the following subcomplexes. 
\begin{enumerate}
\item $T_{\emptyset}$,
\item $T_{v_{k_1}^{i_1}}$,
\item $T_{v_{k_2}^{i_2}}$,
\item $T_{[v_{k_1}^{i_1}, v_{k_2}^{i_2}]}$.
\end{enumerate}

This tells us that $L\subset \overline{V_1}\ast\dots\ast\overline{V_n}$. We now want to prove that $L$ is in fact a flag complex. Given a set $W = \{v_{k_1}^{i_1}, \dots v_{k_l}^{i_l}\}$ of pairwise adjacent vertices in $L$ we want to show that $[v_{k_1}^{i_1}, \dots v_{k_l}^{i_l}]$ is also in $L$. We split $W$ into two sets $W_0 = \big\{v_{k_j}^{i_j}: k_j = 0\big\}$ and $W_1 = W\smallsetminus W_0 = \{w_1, \dots, w_a\}$. Since the vertices in $W_1$ are pairwise adjacent there is a simplex $\sigma\subset\Gamma$ spanning them. We see that the subcomplex $T_{\sigma}$ contains an $(l+1)$-cell which fills the required simplex in $L$. 
\end{proof}

It should be noted the natural projections are injective on each closed cube and not just on the interior. 

\begin{rem}
The complex $K_{\Gamma}$ requires a choice of $n$-partite structure. Given two $n$-partite structures the complexes are homotopy equivalent but are not isomorphic as cube complexes. This is shown in Figure \ref{spaceex}, together with some examples of the construction.
\end{rem}

\begin{figure}\center
\def\svgwidth{150mm}

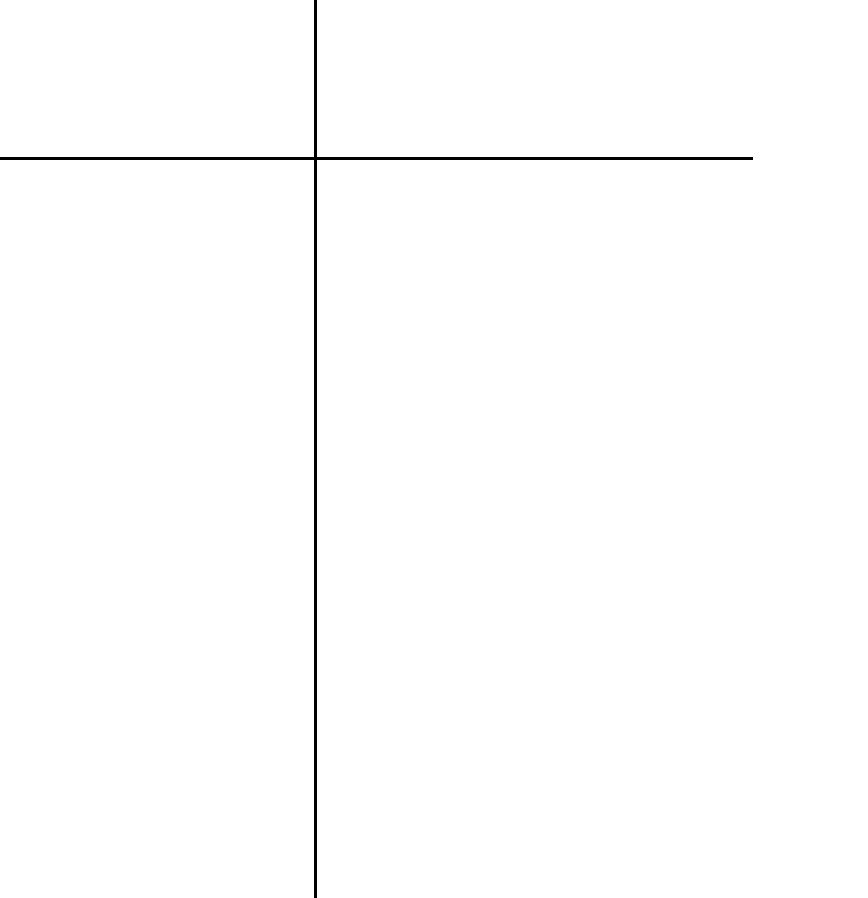
\caption{Some examples of $K_{\Gamma}$. The labels on vertices exhibit the $n$-partite structure. In the last example the shaded regions are identified.}
\label{spaceex}
\end{figure}

\begin{rem}\label{keyrem}
Let $\Theta_i$ be a ``cage graph'' with two vertices that has an edge for each element of $\overline{V_i}$, then the complex $K_{\Gamma}$ constructed above embeds in $\prod_{i=1}^n\Theta_i$.
\end{rem}

 This fact will come in useful later, as will the fact that in the case $n = 3$ there is an embedded copy of $\Theta_1\amalg\Theta_2\amalg\Theta_3$ in $K_{\Gamma}$.

\subsubsection{Finiteness properties of subgroups of RAAGs}

We will require one theorem on the finiteness properties of subgroups of RAAGs from \cite{bux_bestvina-brady_1999other}. A homomorphism $f:A_{\Gamma}\to\ZZ$ can be defined by putting an integer label on each vertex and sending the corresponding generator of $A_{\Gamma}$ to its label. 
\begin{definition}
Let $f\colon A_{\Gamma}\to \ZZ$ be a homomorphism. We denote $\Gamma^{\dagger}<\Gamma$ the full subcomplex spanned by those vertices with label $0$. Let $L^{\ast}$ be the full subcomplex spanned by vertices not in $\Gamma^{\dagger}$. 
\end{definition}

\begin{thm}[Bux-Gonzalez \cite{bux_bestvina-brady_1999other}, Theorem A]\label{buxthm}
Let $f\colon A_{\Gamma}\to\ZZ$ be a homomorphism. Then the following are equivalent:
\begin{itemize}
\item The kernel of $f$ is of type $FP_n$, respectively $F_2$. 
\item For every, possibly empty, dead simplex $\sigma < \Gamma^{\dagger}$ the complex $\Gamma^{\ast}\cap \lk(\sigma, \Gamma)$ is homologically $(n-{\rm dim}(\sigma)-2)$-connected, respectively $L^{\ast}$ is simply connected. 
\end{itemize}
\end{thm}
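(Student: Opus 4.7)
The plan is to mimic the Morse-theoretic approach of Bestvina-Brady and extend it to homomorphisms in which some generators may carry label $0$. Take the Salvetti complex $S_\Gamma$ as a $K(A_\Gamma,1)$ and lift the induced map to an $A_\Gamma$-equivariant cell-linear height function $\tilde f\colon \tilde S_\Gamma \to \RR$. Because edges coming from label-$0$ generators are horizontal, the preimage $\tilde f^{-1}(t)$ fails to be cocompact modulo $\ker(f)$; instead one studies the sublevel filtration $Y_t := \tilde f^{-1}((-\infty,t])$ and tracks how its homotopy type changes as $t$ crosses an integer height.

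By Brown's criterion applied $\ker(f)$-equivariantly, the kernel is of type $FP_n$ (respectively $F_2$) if and only if the inclusions $Y_s/\ker(f) \hookrightarrow Y_t/\ker(f)$ are eventually trivial on $H_\ast$ in degrees at most $n-1$ (respectively on $\pi_1$). The change of topology across a single critical height is obtained, $\ker(f)$-equivariantly, by coning off the ascending and descending links of the dead subtori lying in that level. For a dead simplex $\sigma<\Gamma^\dagger$, the torus $T_\sigma$ has dimension $\dim\sigma+1$ and lies entirely in one level; a direct link calculation in $\tilde S_\Gamma$ shows that the only upward directions out of $T_\sigma$ arise by extending $\sigma$ with a live vertex $w \in \lk(\sigma,\Gamma)$ in the sign giving positive slope. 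Hence the ascending link of $T_\sigma$ is identified with the full subcomplex $\Gamma^\ast \cap \lk(\sigma,\Gamma)$, and the descending link with the same complex.

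Standard Morse-theoretic bookkeeping then reads: to preserve $(n-1)$-acyclicity of the sublevel sets across a critical submanifold of dimension $\dim\sigma+1$, one needs its ascending link to be $(n-\dim\sigma-2)$-acyclic, which is exactly the stated $FP_n$ criterion. For $F_2$ the same argument runs with $\pi_1$ in place of $H_\ast$; handles attached at critical strata with $\dim T_\sigma \ge 2$ do not affect $\pi_1$, so the only new condition comes from the stratum at $\sigma = \emptyset$, yielding simple connectivity of $L^\ast$. The main technical obstacle is the Morse-Bott nature of the setup: unlike in Bestvina-Brady, where critical cells are isolated vertices, here entire positive-dimensional tori $T_\sigma$ are horizontal and must be handled en bloc, either by a Mayer-Vietoris decomposition along the dead stratification $\{T_\sigma : \sigma<\Gamma^\dagger\}$ or by replacing $S_\Gamma$ with a subdivided classifying space on which $\tilde f$ becomes generic so that ordinary discrete Morse theory applies.
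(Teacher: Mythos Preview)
The paper does not prove this theorem at all; it is quoted as Theorem~A of Bux--Gonzalez \cite{bux_bestvina-brady_1999other} and used as a black box. There is therefore no proof in the paper to compare your proposal against.

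That said, a few remarks on your sketch. Your claim that $\tilde f^{-1}(t)$ fails to be $\ker(f)$-cocompact is incorrect: since $A_\Gamma/\ker(f)\cong\ZZ$ acts on $\RR$ by translations, $\ker(f)$ acts cocompactly on the preimage of any compact interval, horizontal edges notwithstanding. The genuine difficulty with dead generators is not cocompactness but that $\tilde f$ is constant on positive-dimensional cells, so the Bestvina--Brady definition of a Morse function fails outright and the usual ascending/descending link analysis at \emph{vertices} no longer captures the change in homotopy type. Your instinct to treat the dead subtori $T_\sigma$ as Morse--Bott critical strata and compute their normal ascending links is the right one, and the identification of that link with $\Gamma^\ast\cap\lk(\sigma,\Gamma)$ is essentially what Bux--Gonzalez establish, though their actual argument proceeds via a decomposition into ``sheets'' rather than a direct Morse--Bott handle attachment. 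Your final paragraph acknowledges but does not resolve this; to make the argument complete you would need either to carry out the Mayer--Vietoris over the dead stratification carefully, or to perturb to a genuine Morse function and track how the links of the new vertices relate to the complexes $\Gamma^\ast\cap\lk(\sigma,\Gamma)$.
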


\subsection{Branched Covers of Cube Complexes}

We will take branched covers of cube complexes to get rid of high dimensional flats. The techniques we will use were developed in \cite{brady_branched_1999}. The idea is to take an appropriate subset which intersects the high dimensional flats.

\begin{definition}
Let $X$ be a non-positively curved cube complex. We say that $Y\subset X$ is a {\em branching locus} if it satisfies the following conditions:
\begin{enumerate}
\item $Y$ is a locally convex cubical subcomplex, 
\item $Lk(c, X)\smallsetminus Y$ is connected and non-empty for all cubes $c$ in $Y$.
\end{enumerate}
\end{definition}

The first condition is required to prove that non-positive curvature is preserved when taking branched covers. The second is a reformulation of the the classical requirement that the branching locus has codimension 2 in the theory of branched covers of manifolds, ensuring that the trivial branched covering of $X$ is $X$. 

\begin{definition}
A {\em branched cover} $\hat{X}$ of $X$ over the branching locus $Y$ is the result of the following process.
\begin{enumerate}
\item Take a finite covering $\overline{X\smallsetminus Y}$ of $X\smallsetminus Y$.
\item Lift the piecewise Euclidean metric locally and consider the induced path metric.
\item Take the metric completion $\hat{X}$ of $\overline{X\smallsetminus Y}$.
\end{enumerate}
\end{definition}

We require some key results from \cite{brady_branched_1999} which allow us to conclude that this process is natural and that the resulting complex is still a non-positively curved cube complex. 

\begin{lem}[Brady, \cite{brady_branched_1999}, Lemma 5.3]
There is a natural surjection $b: \hat{X}\to X$ and $\hat{X}$ is a piecewise Euclidean cube complex. 
\end{lem}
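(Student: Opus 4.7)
The plan is to extend the covering map to the metric completion via the universal property of completion, then prove surjectivity by lifting sequences that approach the branching locus, and finally transfer the cube structure from $X$ to $\hat X$ cube by cube.

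First, the covering map $p:\overline{X\smallsetminus Y}\to X\smallsetminus Y$ is by construction a local isometry for the lifted piecewise Euclidean metric, and the inclusion $X\smallsetminus Y\hookrightarrow X$ is $1$-Lipschitz once $X\smallsetminus Y$ carries its induced path metric (the path metric dominates the restriction of $d_X$). Their composition $b_0:\overline{X\smallsetminus Y}\to X$ is thus $1$-Lipschitz. Because $X$ is a piecewise Euclidean cube complex with only finitely many isometry types of cells, it is a complete metric space, so the universal property of metric completion produces a unique continuous extension $b:\hat X\to X$. Surjectivity is automatic on the dense subset $X\smallsetminus Y$. For a point $y\in Y$, condition (2) is the cubical avatar of the classical requirement that $Y$ be locally of codimension $\geq 2$, and in particular small punctured balls $B_\varepsilon(y)\smallsetminus Y$ are path-connected. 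Taking a sequence $x_n\in X\smallsetminus Y$ with $x_n\to y$ lying eventually in one such component, fixing an initial lift $\tilde x_1$ and propagating along any path joining the $x_n$, one obtains a Cauchy sequence in $\overline{X\smallsetminus Y}$ whose limit in $\hat X$ is sent to $y$ by $b$.

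For the cube complex structure, observe that for every open cube $\mathring c$ of $X$ the restriction of $p$ to $p^{-1}(\mathring c)$ is a covering of the simply connected space $\mathring c$, hence trivial: $p^{-1}(\mathring c)$ decomposes as a disjoint union of open cubes, each sent isometrically onto $\mathring c$ by $p$. I would declare the cells of $\hat X$ to be the closures in $\hat X$ of these open cubes. Applying the surjectivity argument face by face, each such closure is a closed cube isometric to $c$ via $b$, with face attachments induced from those of $X$. Gluing these cubes according to the covering yields a piecewise Euclidean cube complex structure on $\hat X$ under which $b$ is cellular and a cubical isomorphism on each closed cube.

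The main obstacle is the last step: showing that the metric completion of distinct sheets over $\mathring c$ genuinely yields disjoint closed cubes in $\hat X$ and does not introduce unexpected identifications along $Y$. This is precisely where condition (2) together with local convexity from condition (1) are essential. Local convexity ensures that distances in $X\smallsetminus Y$ near $Y$ are controlled by distances in $X$, so that Cauchy sequences in one sheet cannot secretly collapse into another; and connectedness of $\mathrm{Lk}(c,X)\smallsetminus Y$ for cubes $c$ in $Y$ forces the monodromy of the covering around cells of $Y$ to behave like that of an ordinary branched cover, so that each sheet completes to an honest cube rather than to a degenerate quotient.
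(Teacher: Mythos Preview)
The paper does not give its own proof of this lemma: it is quoted verbatim as Lemma~5.3 of Brady's paper \cite{brady_branched_1999} and used as a black box. So there is nothing to compare your argument against here; the actual proof lives in Brady's paper, where the cube structure on $\hat X$ is built carefully using the local structure of the covering near $Y$.

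Your sketch is a reasonable outline of how such a proof should go, and you are honest about where the real work lies. Two places deserve more care. First, in the surjectivity step, to conclude that the lifts $\tilde x_n$ form a Cauchy sequence you need that points near $y$ in $X$ can be joined by short paths in $X\smallsetminus Y$; this is exactly where local convexity of $Y$ (condition~(1)) is used, and it should be invoked explicitly rather than left implicit. Second, and more seriously, you yourself flag the ``main obstacle'': ruling out unexpected identifications in the completion so that the closures of the lifted open cubes really are closed cubes. Your final paragraph gestures at why conditions~(1) and~(2) should handle this, but does not actually carry it out. In Brady's argument this step is done by analysing the link of each cube of $Y$ in $\hat X$ and showing it is a finite cover of the corresponding link in $X$; that analysis is what pins down the local structure and prevents the collapsing you worry about. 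As written, your proposal is a correct strategic outline with the hardest technical step left as an assertion.
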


\begin{lem}[Brady, \cite{brady_branched_1999}, Lemma 5.5]\label{npcbranch}
If $Y$ is a finite graph, then $\hat{X}$ is non-positively curved. 
\end{lem}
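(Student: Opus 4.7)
The plan is to apply Gromov's combinatorial criterion: it suffices to verify that the link of every vertex of $\hat X$ is a flag simplicial complex. The map $b\colon\hat X\to X$ from the previous lemma equips $\hat X$ with the structure of a piecewise Euclidean cube complex whose cubes are the lifts of cubes of $X$; for cubes meeting $Y$ the lifts are defined via the metric completion, which is legitimate because $Y$ is locally convex.

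Fix a vertex $\hat v$ of $\hat X$ and set $v = b(\hat v)$. If $v\notin Y$, then $b$ is a local isometry near $\hat v$, so $\mathrm{Lk}(\hat v,\hat X)\cong\mathrm{Lk}(v,X)$ is flag because $X$ is non-positively curved. Now suppose $v\in Y$. Since $\dim Y = 1$, the vertex $v$ must be a $0$-cell of $Y$, and $B := \mathrm{Lk}(v,Y)$ is a finite discrete subset of the vertex set of $L := \mathrm{Lk}(v,X)$, with one point for each edge of $Y$ incident to $v$. The restriction of $b$ to a deleted neighbourhood of $\hat v$ is a covering of a deleted neighbourhood of $v$, and passing to links presents $\mathrm{Lk}(\hat v,\hat X)$ as the metric completion $\hat L$ of some covering of $L\setminus B$. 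The branching-locus hypothesis that $L\setminus B$ is connected is exactly what guarantees this covering is well defined, and completion adds one ``cone vertex'' for each end of the cover lying over a point of $B$.

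The remaining task, and the heart of the proof, is to verify that $\hat L$ is a flag complex. Given pairwise adjacent vertices $u_1,\dots,u_k$ of $\hat L$, their images $\bar u_i = b(u_i)$ are pairwise adjacent in the flag complex $L$ and so span a simplex $\sigma\subset L$. Those $u_i$ lying above $L\setminus B$ are determined by a choice of sheet of the cover, and their mutual adjacency forces them to lie on a common sheet; those $u_i$ lying over $B$ are branch vertices of $\hat L$, and one must show that the lift of $\sigma$ minus its $B$-vertices extends uniquely across each branch vertex with the given $u_i$ as cone point. The main obstacle is exactly this flagness check at branch vertices. The reason the argument goes through is that $Y$ is one-dimensional, so $B$ is discrete and each branch locus in $\hat L$ over $L$ is a single point; extending a lift of a simplex across such a cone point is unobstructed, because the link of a vertex in an all-right simplex is simply connected. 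Without the one-dimensionality of $Y$ the set $B$ could be positive-dimensional, and extending the lift of $\sigma$ across $B$ would require controlling holonomy of the cover around a nontrivial subcomplex, which in general would genuinely break flagness of $\hat L$.
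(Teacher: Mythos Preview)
The paper does not prove this lemma; it is quoted from \cite{brady_branched_1999} and used as a black box. There is therefore no proof in the present paper to compare your attempt against.

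That said, your outline follows the same strategy as Brady's original argument: reduce to Gromov's link condition, observe that away from $Y$ links are unchanged, and for a vertex $v\in Y$ describe $\mathrm{Lk}(\hat v,\hat X)$ as the completion of a finite cover of $\mathrm{Lk}(v,X)\setminus\mathrm{Lk}(v,Y)$, where $\mathrm{Lk}(v,Y)$ is discrete because $Y$ is a graph. The flagness check is the substantive step, and your treatment of it is a bit loose. Two points deserve care. First, the simplex $\sigma\subset L$ spanned by the images $\bar u_i$ may meet $B$ in more than one vertex; your phrasing suggests a single cone point, but the argument must handle $\sigma\cap B$ of arbitrary size. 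The correct observation is that $\sigma\setminus B$ is a face of $\sigma$, hence a simplex, hence simply connected, so it lifts uniquely to a simplex $\tau$ through the non-branch $u_i$; then each branch vertex $u_j$ is by construction the unique completion point over $\bar u_j$ determined by the sheet of $\tau$, and $\tau$ together with these $u_j$ spans a simplex in $\hat L$. Second, your invocation of ``the link of a vertex in an all-right simplex is simply connected'' is not quite the mechanism at work; what matters is that the piece of $\sigma$ lying in $L\setminus B$ is itself a simplex, so the covering has no holonomy over it. With those clarifications your sketch is essentially Brady's proof.
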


\section{Hyperbolisation in dimension 2} \label{dim2hyp}

This section will be a warm up to our key theorems which are all related to dimension 3. The lower dimensional case carries a lot of the ideas that will be used later.

Throughout this section $\Gamma$ will be a bipartite graph; say $\Gamma\subset V_1\ast V_2$. Let $V_i = \{v^i_1, \dots, v^i_{m_i}\}$. Let $\Theta_i$ be the graph with two vertices,  labelled $0$ and $1$, and $|V_i|+1$ edges, labelled $v_0^i, v_1^i,\dots, v_{m_i}^i$, all directed from $0$ to $1$. We noted in Remark \ref{keyrem} that the complex $K_{\Gamma}$ constructed in Section \ref{construct} is a subcomplex of $\Theta_1\times\Theta_2$.

\begin{thm}\label{raaghyp2}
Let $\Gamma$ be a bipartite graph, let $A_{\Gamma}$ be the associated RAAG and let $K_{\Gamma}$ be the classifying space constructed in Section \ref{construct}. Then there is a branched cover $R_{\Gamma}$ of $K_{\Gamma}$ which has hyperbolic fundamental group.
\end{thm}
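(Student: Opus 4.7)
The plan is to invoke Theorem \ref{flatplane}. Since $\Gamma$ is a graph, the top-dimensional cells of $K_\Gamma$ are the tori $T_e$ for edges $e$ of $\Gamma$, so $K_\Gamma$ is $2$-dimensional and any non-positively curved branched cover $R_\Gamma$ is also $2$-dimensional. It therefore suffices to produce a branched cover of $K_\Gamma$ that is non-positively curved and whose universal cover contains no isometrically embedded $\EE^2$. Non-positive curvature will come for free from Lemma \ref{npcbranch} as long as the branching locus is a finite graph; the content of the argument is to choose both a branching locus and a covering degree so that every $2$-flat in $\tilde{K}_\Gamma$ is destroyed when passing to $\tilde R_\Gamma$.

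First I would identify the flats in $\tilde K_\Gamma$. Using the embedding $K_\Gamma\hookrightarrow \Theta_1\times\Theta_2$ from Remark \ref{keyrem}, each $2$-cell of $K_\Gamma$ is a square with one edge in the $\Theta_1$-direction and the other in the $\Theta_2$-direction. Any combinatorial flat in $\tilde K_\Gamma$ is therefore a rectangular grid whose horizontal strips project into the $\Theta_1$-factor and whose vertical strips project into the $\Theta_2$-factor. Crucially, every such horizontal strip must periodically meet the edge $e_{v_0^1}$ shared by all the circles $S^1_{v_k^1}$, and similarly every vertical strip must meet $e_{v_0^2}$. This points to choosing the branching locus $Y = Y_1 \cup Y_2 \subset K_\Gamma$, where $Y_i$ is the finite subgraph realising the embedded $\Theta_i$, so that the preimage $\tilde Y$ meets every flat of $\tilde K_\Gamma$ transversely.

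To apply Brady's construction I would check that $Y$ satisfies the two axioms for a branching locus. Local convexity is straightforward because each $Y_i$ sits along an edge-fibre of the product $\Theta_1\times\Theta_2$. The link condition is verified vertex by vertex using the bipartite description of $\mathrm{Lk}(v, K_\Gamma)$ worked out in the proof that $K_\Gamma$ is NPC: removing the finitely many vertices of $\mathrm{Lk}(v, Y)$ still leaves a full copy of the ``opposite factor'' $\overline{V_j}$ present, so the complement remains connected and non-empty. Then one takes a surjection $\pi_1(K_\Gamma\setminus Y)\twoheadrightarrow \ZZ/m$ with non-trivial meridional monodromy on each component of $Y$ and defines $R_\Gamma$ to be the metric completion of the associated $m$-fold covering of $K_\Gamma\setminus Y$. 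Lemma \ref{npcbranch} then guarantees $R_\Gamma$ is non-positively curved.

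Finally, an isometric flat $\EE^2\hookrightarrow \tilde R_\Gamma$ would project under the branching map to an isometric flat in $\tilde K_\Gamma$ that, by the second paragraph, must meet $\tilde Y$; but at every point of $\tilde Y$ the cone angle in $\tilde R_\Gamma$ has been inflated to $2\pi m > 2\pi$, which is incompatible with the existence of an isometric Euclidean chart. Theorem \ref{flatplane} then yields hyperbolicity of $\pi_1(R_\Gamma)$. The main obstacle I expect is the classification step in the second paragraph: one needs to rule out exotic, non-axis-aligned flats in $\tilde K_\Gamma$ or enlarge $Y$ to meet them, and also to rule out flats that meet $\tilde Y$ only in the portion where the cover is locally trivial. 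Both considerations force a careful choice of the graph $Y$ and of the homomorphism $\pi_1(K_\Gamma\setminus Y)\twoheadrightarrow\ZZ/m$.
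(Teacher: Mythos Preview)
Your proposed branching locus $Y = Y_1 \cup Y_2$ is one-dimensional inside the two-dimensional complex $K_\Gamma$, hence of codimension one, and this causes it to fail the second axiom for a branching locus. Concretely, take an edge $c$ of $Y_1 = \Theta_1 \times \{0\}$, say $c = e_{v_0^1} \times \{0\}$. Its link ${\rm Lk}(c, K_\Gamma)$ is the discrete set of points indexed by the edges of $\Theta_2$ incident to $0$ (one point for each square containing $c$), while ${\rm Lk}(c, Y) = \emptyset$ since $Y$ has no $2$-cells. Thus ${\rm Lk}(c, K_\Gamma) \smallsetminus Y$ has $|V_2| + 1 > 1$ components. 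Your verification only examined vertices of $Y$, not its edges; in a $2$-complex the branching locus must be $0$-dimensional.

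The paper accordingly branches over a single vertex $\mathbf{0} = (0,0)$. The observation replacing your second paragraph is that every square of $K_\Gamma$ already contains $\mathbf{0}$, so any flat in $\tilde R_\Gamma$ must pass through a preimage of $\mathbf{0}$, and it suffices to arrange that the link of that preimage has no $4$-cycles. Here your proposal has a second, independent gap: a surjection onto an \emph{abelian} group $\ZZ/m$ cannot do this job. After the deformation retraction $K_\Gamma \smallsetminus \{\mathbf{0}\} \simeq \Theta_1 \vee \Theta_2$, each $4$-cycle in ${\rm Lk}(\mathbf{0}, K_\Gamma)$ is carried to a commutator $[v_i^1\overline{v_j^1},\, v_k^2 \overline{v_l^2}]$, which dies under any abelian quotient---so each $4$-cycle would lift to $m$ disjoint $4$-cycles rather than to one long cycle, and flats survive. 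The paper instead uses a non-abelian permutation representation into $S_p$ for a prime $p > 1 + \max(|V_1|, |V_2|)$: the $\Theta_1$-generators go to a $p$-cycle $\lambda$ and the $\Theta_2$-generators to an element $\mu$ satisfying $\mu \lambda \mu^{-1} = \lambda^r$ with $r$ a primitive root mod $p$. Then every commutator $[\lambda^a, \mu^b]$ is again a $p$-cycle, so each $4$-cycle in the link lifts connectedly to a $4p$-cycle, and the link of the branch vertex acquires girth $\ge 5$.
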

There are in fact many branched covers delivering this result. During the course of the proof we will pick a prime $p$ and as this varies different hyperbolic branched covers are obtained.
\begin{proof}
The branching locus will be one of the 4 vertices of $K_{\Gamma}$. Given two vertices $v$ and $w$ there is a homeomorphism of $K_{\Gamma}$ which sends $v$ to $w$. Therefore, it does not matter which vertex we pick to be our branching locus. We will choose the vertex $\mathbf{0} = (0,0)$. 

\begin{figure}\center
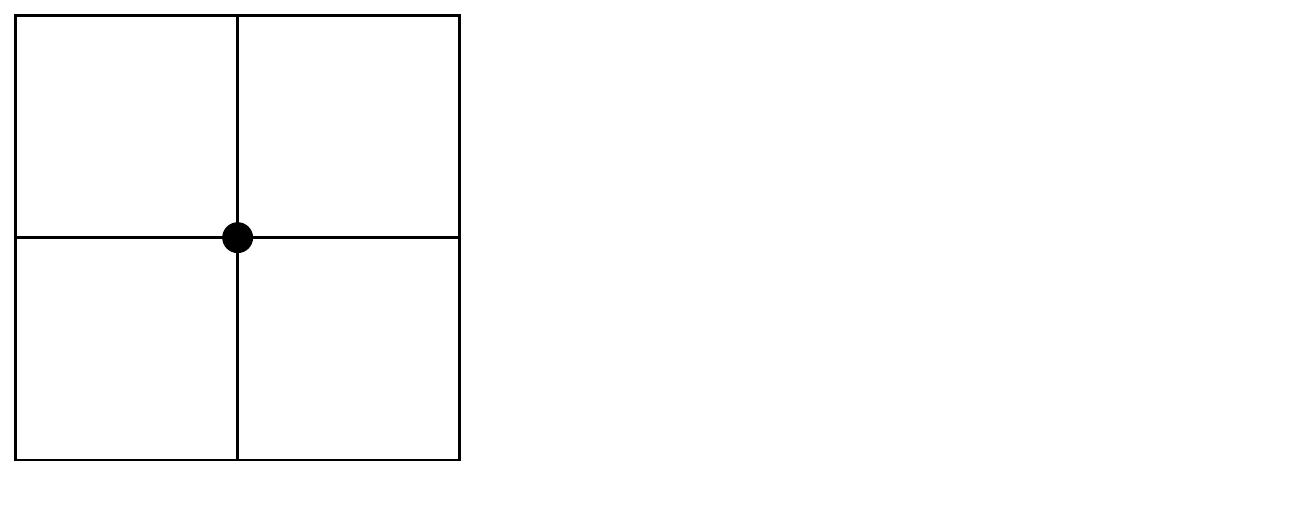
\caption{Depiction of the deformation retraction $T^2\smallsetminus\{(0,0)\}\to S^1\vee S^1$.}
\label{torus}
\end{figure}

$K_{\Gamma}\smallsetminus\{\mathbf{0}\}$ deformation retracts onto the graph $\Theta_1\vee\Theta_2$. This can be seen as follows. We start with a torus cubulated as in Figure \ref{torus}. From each torus we remove the center vertex. The complement deformation retracts onto the graph depicted in Figure \ref{torus}. We now identify the edges via their labels, which will result in $\Theta_1\vee\Theta_2$. 

This argument shows that $\pi_1(K_{\Gamma}\smallsetminus\{\mathbf{0}\})$ is a free group on $|V_1|+|V_2|$ generators. We will denote these generators $a_i = v_{i-1}^1\overline{v_i^1}$ for $i\in\{1, \dots ,|V_1|\}$ and $b_j = v_{j-1}^2\overline{v_j^2}$ for $j \in\{ 1,\dots, |V_2|\}$.

From the deformation described we get a map $\lk(\mathbf{0}, K_{\Gamma})\to \Theta_1\vee\Theta_2$. Each loop of length 4 in $\lk(\mathbf{0}, K_{\Gamma})$ corresponds to a torus as in Figure \ref{torus}. Under the deformation retraction this gets sent to a loop in $\Theta_1\vee\Theta_2$ corresponding to the commutator $[v_k^1\overline{v_l^1}, v_m^2\overline{v_n^2}] = [\prod_{i=k+1}^{k+l}a_i, \prod_{j=m+1}^{m+n}b_j]$.

Let $p>1+\max(|V_1|, |V_2|)$ be a prime and $S_p$ be the symmetric group on $p$ letters. Let $\lambda$ be a $p$-cycle in $S_p$ and $\mu$ an element which conjugates $\lambda$ to $\lambda^r$, where $r$ is a generator of $\ZZ_p^{\times}$. We define our cover using the map, 
\begin{align*}
\rho:\pi_1(\Theta_1\vee\Theta_2)&\to S_p,\\
a_i&\mapsto \lambda,\\
b_j&\mapsto\mu,
\end{align*}
taking the cover corresponding to the stabiliser of $1$ in $S_p$. 

Note that the commutator $[\lambda^m, \mu^n] = \lambda^{(r^n-1)m}$ is a $p$-cycle. This means that the loops of length 4 in the link have connected preimage in the cover. We take the completion $R_{\Gamma}$ of the resulting complex; there is a natural map $b:R_{\Gamma}\to K_{\Gamma}$. The link of the vertex which maps to $\mathbf{0}$ will contain no cycles of length $<5$.

We now prove that the resulting complex has hyperbolic universal cover. We know that $\widetilde{R}{_{\Gamma}}$ is a non-positively curved cube complex. So to prove that it is hyperbolic we just have to show that there are no isometric embeddings $\EE^2\to \tilde{R_{\Gamma}}$ by Theorem \ref{flatplane}. If there is such an embedding, then it will contain at least one square 2-cell. However, each square contains one vertex which is a lift of $\mathbf{0}$ and in the link of this vertex there are no loops of length $2\pi$. However, were the flat plane to be isometrically embedded there would be a loop of length $2\pi$ in the link of every vertex on the plane. This contradiction completes the proof.
\end{proof}

We use this theorem along with Morse theoretic ideas from Section \ref{morsetheory} to find more examples of hyperbolic groups with finitely generated subgroups that are not finitely presentable. 

\begin{prop}\label{hypf1}
Let $\Gamma$ be a complete bipartite graph on sets $A = \{a_1, a_2, a_3\}$ and $B = \{b_1, b_2, b_3\}$, fix $p$ satisfying the above hypotheses and let $b:R_{\Gamma}\to K_{\Gamma}$ be the branched cover constructed above. Then $\pi_1(R_{\Gamma})$ has a finitely generated subgroup which is not finitely presentable. 
\end{prop}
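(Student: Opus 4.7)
The desired subgroup will arise as the kernel of a Bestvina--Brady-type homomorphism $\phi\colon\pi_1(R_\Gamma)\to\ZZ$, and its finiteness properties will be established via the Morse theory of Section \ref{morsetheory}. First I assign integer weights to the edges of $K_\Gamma$, with each $2$-cell boundary's weights summing to zero, so as to realise a homomorphism $f\colon A_\Gamma\to\ZZ$ sending every standard generator $v_k^i$ (with $k\geq 1$) to a non-zero integer. Composition with the branched covering $b\colon R_\Gamma\to K_\Gamma$ yields a circle-valued height map $h\colon R_\Gamma\to S^1$; I set $\phi=h_*$ and $H=\ker\phi$.

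The analysis of $H$ is carried out by computing the ascending and descending links of the lifted Morse function at every vertex of $\widetilde{R_\Gamma}$. These vertices come in two types: the unique vertex $\hat{\mathbf{0}}$ lying over the branching locus $\{\mathbf{0}\}$ (unique because the induced cover of $\lk(\mathbf{0},K_\Gamma)$ is connected, as in the proof of Theorem \ref{raaghyp2}), and the $p$ preimages of each of the other three vertices of $K_\Gamma$. At a non-branching vertex the link agrees with its image under $b$, so the ascending and descending links are subcomplexes of the $K_\Gamma$-link $K_{4,4}$. I will choose the weights so that at such a vertex one of these two subcomplexes contains a non-trivial cycle; the Morse-theoretic criterion (whose RAAG form is Theorem \ref{buxthm}) will then force $H$ not to be $F_2$.

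To conclude that $H$ is finitely generated it remains to verify that every ascending and descending link is connected. Away from the branching locus this is immediate. At $\hat{\mathbf{0}}$ the link is a connected $p$-fold cover of $K_{4,4}$, and the ascending and descending links are covers of the corresponding subcomplexes of $K_{4,4}$. Their connectedness is equivalent to the restricted monodromy being transitive on the $p$ sheets, which will follow from the fact used in the proof of Theorem \ref{raaghyp2} that $\rho$ sends each commutator $[\lambda^m,\mu^n]$ to a $p$-cycle: any $4$-cycle in the subcomplex enclosing a square $e_{v_k^1}\times e_{v_l^2}$ with $k,l\geq 1$ is realised in $S_p$ by a non-trivial power of $\lambda$, which is again a $p$-cycle.

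The most delicate step is choosing the weights with the right balance. A naive choice making the ascending link at $(0,0)$ equal to the full copy of $\Gamma=K_{3,3}$ in $K_{4,4}$ leaves the descending link as a single edge, whose $p$-fold cover at $\hat{\mathbf{0}}$ splits into $p$ disjoint edges and destroys finite generation. I therefore plan to split the eight link-vertices at $(0,0)$ evenly so that both the ascending and descending subcomplexes of $K_{4,4}$ are copies of $K_{2,2}$ each enclosing at least one $\Gamma$-square. With such balanced weights, the argument of the previous paragraph shows that both covers at $\hat{\mathbf{0}}$ are connected, while the same $K_{2,2}$'s at any non-branching vertex supply a non-trivial $4$-cycle obstructing $F_2$, so Section \ref{morsetheory} packages everything into the desired $H$.
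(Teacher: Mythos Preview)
Your plan is correct and follows essentially the same route as the paper: orient the four edges of each $\Theta_i$ in a balanced $2$--$2$ fashion so that every ascending and descending link in $K_\Gamma=\Theta_1\times\Theta_2$ is a $4$-cycle $K_{2,2}\cong S^1$; at the unique branch vertex $\hat{\mathbf 0}$ these $4$-cycles lift to single $4p$-cycles in the branched link, and Theorem~\ref{notfp} with $n=1$ then gives a kernel that is finitely generated but not of type $FP_2$, hence not finitely presented. Your monodromy argument for connectedness at $\hat{\mathbf 0}$ is exactly what underlies the paper's terser assertion that ``the loop of length $4$ lengthens but remains a copy of $S^1$.''

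One small redundancy: the extra condition that each $K_{2,2}$ ``enclose at least one $\Gamma$-square'' is not needed. The monodromy of \emph{any} $4$-cycle on vertices $v_{k_1}^1,v_{k_2}^1,v_{l_1}^2,v_{l_2}^2$ in $\lk(\mathbf 0)$ is the commutator $[\lambda^{k_2-k_1},\mu^{l_2-l_1}]=\lambda^{(r^{\,l_2-l_1}-1)(k_2-k_1)}$, and since $0<|k_2-k_1|,|l_2-l_1|\le 3$ while $p>4$, this is always a $p$-cycle regardless of whether any index equals $0$.
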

\begin{proof}
In this case $K_{\Gamma} = \Theta_1\times\Theta_2$ where $\Theta_i$ is as above and has 4 edges. Put an orientation on each edge of $\Theta_i$ such that there are 2 edges oriented towards each vertex and 2 edges oriented away from each vertex. 
Cubulating $S^1$ with one vertex and one oriented edge. Define maps $h_i:\Theta_i\to S^1$ on each edge by their orientation. Define $h:K_{\Gamma}\to S^1$ by $h(x_1, x_2) = h_1(x_1)+h_2(x_2)$. Precomposing with the branched covering map $b$ and lifting to universal covers, we obtain a Morse function $f:\tilde{R}_{\Gamma}\to \RR$, which is $(hb)_*$-equivariant. 

The ascending and descending links of this Morse functions are the preimages, under $b$, of the ascending and descending links of $h$. For the Morse function $h$ the ascending and descending links are joins of the ascending and descending links for $h_i$. These will be copies of $S^0$, so the ascending and descending links will be copies of $S^1$. 

There are now two possibilities, if we look at a vertex in $R_{\Gamma}$ which does not map to $\mathbf{0}$ the ascending and descending links will remain unchanged and will still be copies of $S^1$. 

If the vertex in question maps to $\mathbf{0}$ we will study the ascending link the other case being identical. The ascending link of $\mathbf{0}$ is a loop of length $4$. Taking a branched cover cause this loop of length 4 to lengthen but in the preimage it will still be a copy of $S^1$. It follows that the kernel of $(hb)_*$ is finitely generated but not finitely presentable by \ref{notfp}. 
\end{proof}

Sizeable graphs are used in \cite{kropholler_hyperbolic_2015} to give examples of hyperbolic groups with subgroups which are type $F_2$ not $F_3$. Here we outline a procedure for producing examples of sizeable graphs.

\begin{prop}\label{constructsize}
Let $A$ and $B$ be sets with partitions $A = A^+\sqcup A^-$ and $B = B^+\sqcup B^-$ where $A^-, B^-$ are non-empty and $|A^+|, |B^+|>1$. Let $\Gamma $ be the complete bipartite graph on $A$ and $B$. Let $A_{\Gamma}$ be the associated right angled Artin group, $K_{\Gamma}$ the classifying space from \ref{classpace} and $R_{\Gamma}$ the branched covering constructed in Theorem \ref{raaghyp2}. Let $v\in R_{\Gamma}$ be a vertex mapping to $(0,0)$. Then $Lk(v, R_{\Gamma})$ is sizeable. 
\end{prop}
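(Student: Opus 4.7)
The plan is to identify $\lk(v, R_\Gamma)$ explicitly as a finite covering space of $\lk(\mathbf{0}, K_\Gamma)$ and then verify each clause of the definition of \emph{sizeable} from \cite{kropholler_hyperbolic_2015} against this description. First, because $\Gamma$ is the complete bipartite graph on $A \sqcup B$, the link calculation in the lemma establishing that $K_\Gamma$ is non-positively curved gives $\lk(\mathbf{0}, K_\Gamma)$ as the complete bipartite graph on $\overline{V_1} = A \cup \{v_0^1\}$ and $\overline{V_2} = B \cup \{v_0^2\}$. The partitions $A = A^+ \sqcup A^-$ and $B = B^+ \sqcup B^-$, together with a convention assigning $v_0^i$ to one prescribed side (matching the edge-orientation convention used in the proof of Proposition \ref{hypf1}), endow this link with a $\pm$ vertex labeling; this labeling is what will play the role of ascending/descending in the sizeable definition.

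Next, I would describe the covering. Since the branching locus is the single point $\mathbf{0}$, the link $\lk(v, R_\Gamma)$ is the component containing (the lifted edges at) $v$ of the preimage of $\lk(\mathbf{0}, K_\Gamma)$ in the cover of $K_\Gamma \smallsetminus \{\mathbf{0}\}$. This covering is classified by the composition $\pi_1(\lk(\mathbf{0}, K_\Gamma)) \to \pi_1(\Theta_1 \vee \Theta_2) \xrightarrow{\rho} S_p$, where the first arrow uses the deformation retract from the proof of Theorem \ref{raaghyp2}. By the commutator computation in that proof, every $4$-cycle in $\lk(\mathbf{0}, K_\Gamma)$ coming from a commutator $[v_k^1 \overline{v_l^1}, v_m^2 \overline{v_n^2}]$ is sent under this composition to the $p$-cycle $\lambda^{(r^{n-m}-1)(l-k)}$, and so lifts to a single cycle of length $4p$ in $\lk(v, R_\Gamma)$. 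In particular, $\lk(v, R_\Gamma)$ contains no embedded $4$-cycles of this ``commutator'' type.

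Finally, I would verify the clauses of the definition of sizeable in turn: (i) the nonemptiness of both parts of the $\pm$ bipartition on each side, which follows directly from the hypotheses $A^-, B^- \neq \emptyset$ and $|A^+|, |B^+| > 1$; (ii) the absence of length-$4$ cycles of prohibited types, which follows from Step 2; and (iii) connectivity (or simple connectivity, depending on exactly how sizeable is phrased) of the full subcomplexes on the $+$ and on the $-$ vertices. The main obstacle will be (iii): restricting the cover to such a full subcomplex (itself a smaller complete bipartite graph) could a priori disconnect the cover, and one must show it does not. This amounts to a transitivity question in $S_p$ --- whether appropriate restricted products of $\lambda^m$ and $\mu^n$ generate a transitive subgroup of $S_p$ --- and is precisely where the hypotheses $|A^+|, |B^+| > 1$ are used (a single generator on one side would allow only cyclic commutator products, insufficient for transitivity once the $v_0^i$'s are removed). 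Once this transitivity is in hand, the remaining clauses of sizeable unpack routinely from Steps 1 and 2.
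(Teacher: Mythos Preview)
Your outline matches the paper's proof: identify $\lk(v,R_\Gamma)$ as a cover $\overline\Lambda$ of the complete bipartite graph $\Lambda$ on $A'=A\cup\{a_0\}$ and $B'=B\cup\{b_0\}$ (with $a_0,b_0$ placed on the minus side, so $A'^-=A^-\cup\{a_0\}$ and $B'^-=B^-\cup\{b_0\}$), observe that bipartiteness is inherited, rule out $4$-cycles via the commutator computation, and finally check connectivity of the relevant induced subgraphs. Two small corrections: the sizeable condition requires connectedness of all \emph{four} subgraphs $\overline\Lambda(A'^s\cup B'^t)$ for $(s,t)\in\{+,-\}^2$, not just a ``$+$ part'' and a ``$-$ part''; and your ``of this commutator type'' caveat in (ii) is unnecessary, since every $4$-cycle in a complete bipartite link is of that form.

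The only substantive difference is your handling of (iii). You cast it as a transitivity question in $S_p$; this does work --- a single $4$-cycle in $\Lambda(A'^s,B'^t)$ already maps to a $p$-cycle, hence to a transitive subgroup --- but the paper's argument is more direct and does not re-invoke the permutation representation. Since the hypotheses give $|A'^s|,|B'^t|\ge 2$, the subgraph $\Lambda(A'^s,B'^t)$ is itself complete bipartite and can be written as $c_1\cup c_2\cup\cdots$ with each $c_i$ a $4$-cycle and $c_{m+1}\cap\bigl(\bigcup_{i\le m}c_i\bigr)\neq\emptyset$; each $c_i$ has connected preimage in $\overline\Lambda$, and an increasing union of overlapping connected sets is connected. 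This also pinpoints the role of the size hypotheses more cleanly than your explanation: if $|A'^+|=1$ then $\Lambda(A'^+,B'^t)$ is a tree, its $\pi_1$ is trivial, and its preimage has $p$ components. The obstruction is the absence of $4$-cycles altogether, not that the available commutators are ``only cyclic''.
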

\begin{proof}
The link of a vertex is a cover $\overline{\Lambda}$ of the graph $\Lambda$. $\Lambda$ is the complete bipartite graph on sets $A' = A\cup\{a_0\}$ and $B' = B\cup\{b_0\}$. Define $A'^+ = A^+$ and $A'^- = A'\smallsetminus A'^+$, defining $B'^+$ and $B'^-$ similarly.  

The graph $\overline{\Lambda}$ has the following properties. It is bipartite as it is the cover of a bipartite graph and it has no cycles of length 4 since the branching process was designed to remove these. To check the last property we let $A_{\overline{\Lambda}}^+$ be the set of vertices mapping to $A'^+\subset\Lambda$ and $A_{\overline{\Lambda}}^-$ the complement of these in the bipartite structure. We define $B_{\overline{\Lambda}}^+$ and $B_{\overline{\Lambda}}^-$ similarly. 

We must prove that $\overline{\Lambda}(A_{\overline{\Lambda}}^s\cup B_{\overline{\Lambda}}^t)$ is connected. $\Lambda(A'^s, B'^t)$ can be covered by finitely many loops $c_1, c_2, \dots$ of length 4 such that $c_{m+1}\cap \cup_{i=1}^m c_m \neq\emptyset$ for all $m$. When taking the branched cover each $c_i$ has connected preimage and the intersection will still be non empty so the resulting union will be connected.
\end{proof}

\section{Almost hyperbolisation in dimension 3}\label{almosthyp}

\begin{notation}
Given a tripartite complex $L\subset V_1\ast V_2\ast V_3$, let $L_{ij}$ be the full subcomplex spanned by the vertices of $V_i\cup V_j$. 
\end{notation}

The main theorem of this section is the following.

\begin{thm}
Let $\Gamma$ be a tripartite flag complex, $A_{\Gamma}$ the associated RAAG and $K_{\Gamma}$ the classifying space constructed in Section \ref{construct}. Then there exists a branched cover $X$ of $K_{\Gamma}$, such that $\pi_1{(X)}$ contains no subgroups isomorphic to $\ZZ^3$. 
\end{thm}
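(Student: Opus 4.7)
The plan is to extend the branched-cover construction of Theorem~\ref{raaghyp2} from dimension $2$ to dimension $3$: build a branched cover $b\colon X\to K_\Gamma$ whose universal cover admits no isometrically embedded copy of $\RR^3$. By Theorem~\ref{flattorus} this is equivalent to $\pi_1(X)$ containing no $\ZZ^3$ subgroup, so it suffices to obstruct isometric Euclidean $3$-flats in $\tilde X$.

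First I would select a branching locus $Y\subset K_\Gamma$ that is a finite $1$-dimensional subcomplex, so that Lemma~\ref{npcbranch} guarantees the completed branched cover remains non-positively curved. Guided by Remark~\ref{keyrem}, a natural candidate is to build $Y$ out of the three embedded cage graphs $\Theta_1,\Theta_2,\Theta_3$ sitting in $K_\Gamma$, arranged so that at every vertex $v$ of $K_\Gamma$ the trace $Y\cap \lk(v,K_\Gamma)$ hits every one of the three coordinate pencils of edges. I would then verify the two Brady-branching hypotheses for $Y$: local convexity in $K_\Gamma$, and connectedness with non-emptiness of $\lk(c,K_\Gamma)\setminus Y$ for every cube $c\subset Y$; both follow from the explicit product description of $K_\Gamma$ given in Section~\ref{construct}.

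Next, mirroring the warm-up, I would note that $K_\Gamma\setminus Y$ deformation retracts onto a finite graph with free fundamental group and choose a sufficiently large prime $p$ together with a homomorphism $\rho\colon \pi_1(K_\Gamma\setminus Y)\to S_p$ sending each canonical free generator to a $p$-cycle in such a way that every commutator arising from the boundary of a $2$-cell of $K_\Gamma$ incident to $Y$ is again a $p$-cycle. The cover of $K_\Gamma\setminus Y$ associated with the stabilizer of a point under $\rho$, completed as in \cite{brady_branched_1999}, provides the desired $X$ together with the natural branching map $b\colon X\to K_\Gamma$.

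To conclude that $\tilde X$ admits no isometric $\RR^3$, one uses the structural fact that every $3$-cube of $K_\Gamma$ has its eight vertices equal to $\{0,1\}^3$; hence every $3$-cube of $\tilde X$ contains preimages of all vertices of $K_\Gamma$, and in particular any hypothetical isometric $\RR^3\hookrightarrow\tilde X$ must pass through some vertex $\hat v$ of $b^{-1}(Y)$. Such a flat would yield an isometrically embedded round $S^2$ in $\lk(\hat v,\tilde X)$, and in an all-right spherical complex this sphere must be the octahedron $S^0\ast S^0\ast S^0$. Since $\lk(\hat v,\tilde X)$ is the branched cover of $\lk(v,K_\Gamma)$ over $\lk(v,Y)$, the choice of $Y$ (hitting every coordinate direction at each $v$) combined with the commutator condition on $\rho$ will force any would-be octahedron to meet a nontrivial branch point, inserting a cone angle strictly greater than $2\pi$ along some meridian and so destroying the required isometric embedding. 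The main technical obstacle is precisely this last step: arranging $Y$ and $\rho$ jointly so that every octahedral sub-link at every vertex of $b^{-1}(Y)$ is obstructed, which is the dimension-$3$ analogue of the ``no loop of length $2\pi$ in the link of a lift of $\mathbf{0}$'' argument exploited in Theorem~\ref{raaghyp2}.
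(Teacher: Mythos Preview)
Your overall architecture is right: pick a $1$-dimensional branching locus so Lemma~\ref{npcbranch} applies, design a permutation representation that unwinds $4$-cycles in links of branch vertices, and then argue that an isometric $\RR^3$ would force an octahedron in some such link that cannot be there. But two specific choices in your sketch diverge from what actually makes the argument close, and the step you yourself flag as ``the main technical obstacle'' is exactly where the paper does something you have not anticipated.

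First, the branching locus. You propose arranging the $\Theta_i$ so that at \emph{every} vertex all three coordinate directions are hit. The paper instead takes
\[
B=(\Theta_1\times\{0\}\times\{1\})\cup(\{1\}\times\Theta_2\times\{0\})\cup(\{0\}\times\{1\}\times\Theta_3),
\]
so that six of the eight vertices of $K_\Gamma$ lie on $B$, each on \emph{exactly one} component $\Theta_i$, while $(0,0,0)$ and $(1,1,1)$ are off $B$ entirely. This asymmetry is what makes the octahedron argument tractable: at a branch vertex the link retains its tripartite structure $\overline{V_1}\ast\overline{V_2}\ast\overline{V_3}$ with one distinguished factor (the branching direction). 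Any octahedron $S^0\ast S^0\ast S^0$ in that link must place one $S^0$ in the distinguished factor, and the remaining four vertices then form a $4$-cycle in the bipartite graph on the other two factors---precisely the $4$-cycles one can target. With your ``all three directions at every vertex'' locus there is no distinguished factor, and this reduction is unavailable.

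Second, and more seriously, the permutation representation. You propose a single prime $p$ and a single $\rho\colon\pi_1(K_\Gamma\setminus Y)\to S_p$, together with an unproved claim that $K_\Gamma\setminus Y$ retracts onto a graph. The paper does not attempt to analyse $\pi_1(K_\Gamma\setminus B)$ directly at all. Instead it uses the three coordinate projections
\[
p_k\colon K_\Gamma\setminus B\longrightarrow K_{\Gamma_{ij}}\setminus\{(0,1)\},
\]
which are retractions, to pull back the three $2$-dimensional representations already built in Theorem~\ref{raaghyp2}, one for each pair $\{i,j\}$, with three distinct primes $q_{12},q_{23},q_{31}$; these are then combined into a single map $\pi_1(K_\Gamma\setminus B)\to S_q$ with $q=q_{12}q_{23}q_{31}$. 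The payoff is a clean decoupling: at a vertex on (say) the $\Theta_2$ component of $B$, the link surjects under the relevant projection onto the link of $(0,1)$ in $K_{\Gamma_{31}}$, so the $4$-cycles in the $\overline{V_1}\ast\overline{V_3}$ bipartite part get unwrapped to length $4q_{31}$, while under the other two projections the image of the link is contractible and those representations act trivially. This is what lets you certify, vertex by vertex, that the bipartite graph on the two non-branching factors has no $4$-cycles, hence no octahedra. Your single-prime $\rho$ offers no such decoupling, and without it the phrase ``arranging $Y$ and $\rho$ jointly so that every octahedral sub-link is obstructed'' is a restatement of the goal rather than a method for achieving it.
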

\begin{proof}
Our branching locus will be $$B = (\Theta_1\times\{0\}\times\{1\})\cup(\{1\}\times\Theta_2\times\{0\})\cup(\{0\}\times\{1\}\times\Theta_3),$$ where $\Theta_i$ is the graph with 2 vertices, $0$ and $1$, and $|V_i|+1$ edges, each directed from $0$ to $1$. We have three maps
\begin{align*}
p_1:K_{\Gamma}\smallsetminus B&\to K_{\Gamma_{12}}\smallsetminus\{(0,1)\},\\
p_2:K_{\Gamma}\smallsetminus B&\to K_{\Gamma_{23}}\smallsetminus\{(0,1)\},\\
p_3:K_{\Gamma}\smallsetminus B&\to K_{\Gamma_{31}}\smallsetminus\{(0,1)\},
\end{align*}
which are the restrictions of the projections $\Theta_1\times\Theta_2\times\Theta_3\to \Theta_i\times\Theta_j$. The maps from Section \ref{dim2hyp} give us three maps 
\begin{align*}
\pi_1(K_{\Gamma_{12}}\smallsetminus\{(0,1)\})&\to S_{q_{12}}\\
\pi_1(K_{\Gamma_{23}}\smallsetminus\{(0,1)\})&\to S_{q_{23}}\\
\pi_1(K_{\Gamma_{31}}\smallsetminus\{(0,1)\})&\to S_{q_{31}}.
\end{align*}
Here, $q_{ij}$ are the primes picked in the process of taking a branched cover in Theorem \ref{raaghyp2}. Let $q = q_{12}q_{23}q_{31}$. We can combine these permutation representations with the projection maps above to get a map $\pi_1(K_{\Gamma}\smallsetminus B)\to S_{q}$, which defines a $q$-fold cover of $K_{\Gamma}\smallsetminus B$ by taking the subgroup corresponding to the stabiliser of $1$ in $S_{q}$. We complete this cover to get our branched cover $X$. 

Let $\{i,j,k\} = \{1,2,3\}$. The maps $p_1, p_2, p_3$ are retractions to see this consider the natural map
$$K_{\Gamma_{ij}}\hookrightarrow K_{\Gamma_{ij}}\times\left\{\frac{1}{2}\right\}\hookrightarrow K_{\Gamma_{ij}}\times e_{v_0^k}\hookrightarrow K_{\Gamma}.$$
It follows that $\pi_1(K_{\Gamma}\smallsetminus B)\to\pi_1(K_{\Gamma_{ij}}\smallsetminus\{(0,1)\})$ is surjective. 
We will now consider what the link of a vertex in the branched cover is. We will restrict our attention to a vertex mapping to $(0,1,0)$ all the other cases are similar. 

\begin{figure}\center
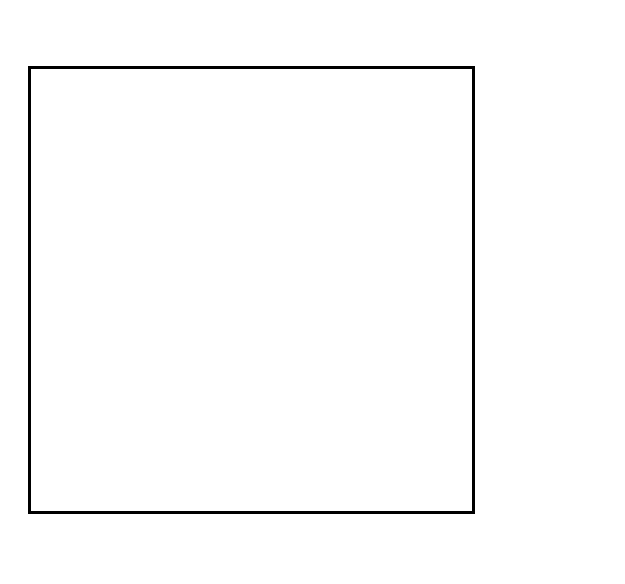
\caption{A loop corresponding to the commutator $[v_i^1, v_j^2]$.}
\label{loop1}
\end{figure}

We will consider the image of the link of $(0,1,0)$ under the three maps $p_1, p_2, p_3$. In the image of the map $p_1$, this link is sent surjectively onto the link of $(0,1)$ in $K_{\Gamma_{12}}$. By Section \ref{dim2hyp} we know that $K_{\Gamma_{12}}\smallsetminus\{(0,1)\}$ deformation retracts onto the graph $\Theta_1\vee\Theta_2$. Loops of length 4 are sent to commutators of the form $[v_i^1\overline{v}_j^1, v_k^2\overline{v}_l^2].$ In the map $\pi_1(K_{\Gamma_{12}}\smallsetminus\{(0,1)\})\to S_q$ this commutator is sent to a $q_{12}$-cycle. 

We must now consider the image under the maps $p_2, p_3$. These maps send the link to a disjoint union of contractible subsets, so the maps $\pi_1(K_{\Gamma}\smallsetminus B)\to S_{q_{23}}$ and $\pi_1(K_{\Gamma}\smallsetminus B)\to S_{q_{31}}$ send the image of the fundamental group of the link to the identity. 

From this we can see that, in the cover of $K_{\Gamma}\smallsetminus B$ corresponding to the stabiliser of $1$ in $S_{q}$, the preimage of one of the loops of length 4 depicted in Figure \ref{loop1} will have $q_{23}q_{31}$ components, and each component is a loop of length $4q_{12}$. We will now prove that there are no isometrically embedded planes of dimension $>2$. This, combined with Theorem \ref{flattorus}, will complete the proof of the theorem. 

\begin{figure}\center
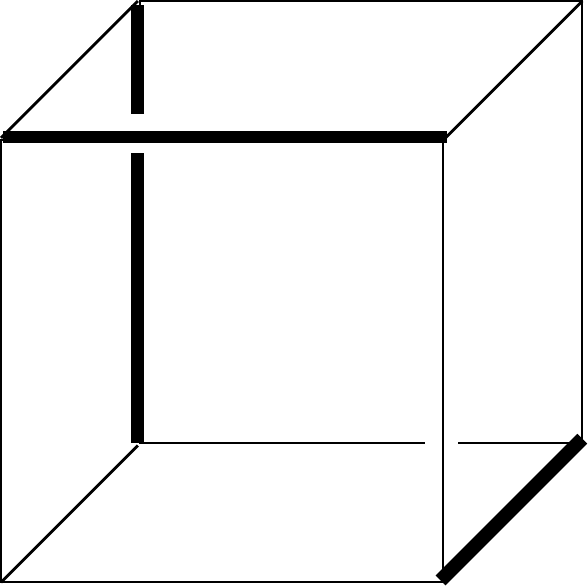
\caption{Intersection pattern of $\tilde{L}$ on a cube in $\tilde{X}$.}
\label{intpattern}
\end{figure}

Since the resulting cube complex has cubes of dimension at most 3, we can see that the dimension of an isometrically embedded flat plane is at most 3. If such a copy $E$ of $\EE^3$ were isometrically embedded in $\tilde{X}$, it would contain at least one cube and would in fact be a cubical embedding of the flat. For each vertex contained in the flat, the link would contain a subcomplex isomorphic to an octahedron. Let $\tilde{B}$ be the lift of the branching locus $B$. We can see how this intersects each cube in $\tilde{X}$ by Figure \ref{intpattern}. As such any 3-flat will intersect a vertex on $\tilde{B}$.

Let $x$ be a vertex on $\tilde{B}\cap E$. Then $\lk(x, \tilde{X})$ has a tripartite structure. If there is an octahedron in this complex it has a tripartite structure of the form $S^0\ast S^0\ast S^0$. One of the copies of $S^0$ will be contained in the vertices corresponding to $\tilde{B}$. The other 4 vertices form a loop of length 4 in the bipartite graph defined by the edges not in $\tilde{B}$. However, we constructed the branched cover so that this graph has no cycles of length $<6$. 
\end{proof}

\section{Bestvina-Brady Morse theory}\label{morsetheory}

While Bestvina-Brady Morse theory is defined in the more general setting of affine cell complexes, in this instance we shall only need it for non-positively curved cube complexes. 

For the remainder of this section, let $X$ be a CAT(0) cube complex and let $G$ be a group which acts freely, cellularly, properly and cocompactly on $X$. Let $\phi\colon G\to\ZZ$ be a homomorphism and let $\ZZ$ act on $\RR$ by translations.

Recall that $\chi_c$ is the characteristic map of the cube $c$.

\begin{definition}
We say that a function $f\colon X\to \RR$ is a {\em $\phi$-equivariant Morse function} if it satisfies the following 3 conditions.
\begin{itemize}
\item For every cube $c\subset X$ of dimension $n$, the map $f\chi_c\colon [0,1]^n\to\RR$ extends to an affine map $\RR^n\to\RR$ and $f\chi_c\colon [0,1]^n\to\RR$ is constant if and only if $n=0$.
\item The image of the $0$-skeleton of $X$ is discrete in $\RR$.
\item $f$ is $\phi$-equivariant, i.e. $f(g\cdot x) = \phi(g)\cdot f(x)$.
\end{itemize}
\end{definition}

We will consider the level sets of our function, which we will denote as follows. 

\begin{definition}
For a non-empty closed subset $I\subset\RR$ we denote by $X_I$ the preimage of $I$. We also use $X_t$ to denote the preimage of $t\in\RR$.
\end{definition}

The kernel $H$ of $\phi$ acts on the cube complex $X$ in a manner preserving each level set $X_{I}$. Moreover, it acts properly and cocompactly on the level sets. We will use the topological properties of the level sets to gain information about the finiteness properties of the group. We will need to examine how they vary as we pass to larger level sets. 

\begin{thm} [Bestvina-Brady, \cite{bestvina_morse_1997}, Lemma 2.3]
If $I\subset I'\subset\RR$ are closed intervals and $X_{I'}\smallsetminus X_{I}$ contains no vertices of $X$, then the inclusion $X_I\hookrightarrow X_{I'}$ is a homotopy equivalence. 
\end{thm}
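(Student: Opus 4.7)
Plan: The goal is to construct a strong deformation retraction $r\colon X_{I'}\to X_I$ extending the identity on $X_I$. Write $I = [a,b]$ and $I' = [a',b']$, so $X_{I'}\smallsetminus X_I = X_{[a',a)}\cup X_{(b,b']}$ and by hypothesis contains no vertices. Since $f$ is affine on each cube $c$ and attains its extreme values at vertices of $c$, this forces $c\cap X_I$ to be non-empty whenever $c\cap X_{I'}$ is: either a vertex of $c$ already lies in $X_I$, or any segment in $c$ joining a vertex with $f$-value $<a'$ to one with $f$-value $>b'$ must cross $X_I$ by continuity.

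By applying the argument one side at a time (the lower and upper parts of $I'\smallsetminus I$ are handled symmetrically), it suffices to retract $X_{[a,b']}$ onto $X_I$ under the extra assumption that no vertex has $f$-value in $(b,b']$. The core idea is to build a product structure $X_{[b,b']} \cong X_b \times [b,b']$ in which $f$ corresponds to projection onto the second factor; given this, the retraction is the linear collapse of $[b,b']$ onto $\{b\}$, extended by the identity on $X_{[a,b)}$. Equivalently, one integrates a continuous vector field $V$ on $X_{[b,b']}$ with $V\cdot df \equiv 1$, and uses its time-$(t-b)$ flow as the homeomorphism $X_b\xrightarrow{\cong} X_t$ assembling the product.

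The main obstacle is constructing $V$ continuously across cube faces. On the interior of each cube $c$, the natural choice $V = \nabla(f|_c)/|\nabla(f|_c)|^2$ satisfies $V\cdot df = 1$ but is in general discontinuous across a face $\tau\subset c$, because $|\nabla(f|_c)|\neq |\nabla(f|_\tau)|$ and because $\nabla(f|_c)$ has a component normal to $\tau$. I would resolve this via a partition of unity that smoothly interpolates between the face-level and cube-level vectors near each face, using convex combinations (which preserve the identity $V\cdot df=1$ because $df$ is linear). The hypothesis that $(b,b']$ contains no vertex values ensures that the combinatorial cut-pattern of each cube by $X_t$ is constant for $t\in[b,b']$, so the interpolated field is well-defined and its integral flow maps $X_b$ homeomorphically onto each $X_t$, yielding the required product structure and hence the deformation retraction.
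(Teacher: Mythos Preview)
The paper does not supply its own proof of this statement; it is simply quoted from Bestvina--Brady. So there is no in-paper argument to compare against, only the original source.

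Your overall strategy---reduce to one side and collapse the slab $X_{[b,b']}$ onto $X_b$---is the right shape, but two points need attention.

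First, the product claim $X_{[b,b']}\cong X_b\times[b,b']$ is too strong as stated. The hypothesis excludes vertices only from $(b,b']$, not from $\{b\}$. If some vertex $v$ has $f(v)=b$, then $X_b$ can have strictly smaller combinatorial type than $X_t$ for $t\in(b,b']$: take two edges emanating upward from a single vertex $v$ with $f(v)=b$; then $X_b=\{v\}$ is one point while every $X_t$ with $t>b$ is two points, so no $f$-compatible product exists on $[b,b']$. What \emph{is} true is that $X_{(b,b']}\cong X_{b'}\times(b,b']$, and the collapse extends continuously to $t=b$ as a (possibly non-injective) map onto $X_b$. That still yields the deformation retraction, but it is a genuine extra step your outline does not cover.

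Second, the vector-field construction is more delicate on a cube complex than your sketch acknowledges. The space is not a manifold: at a face $\tau$ shared by several maximal cubes the tangent cone is not a vector space, so ``interpolate between $V_\tau$ and $V_c$'' must be carried out separately and compatibly in every adjacent $c$, and across faces of all codimensions simultaneously. Moreover, your interpolated field may have a component pointing \emph{into} a face from the interior of a cube; whether an integral curve then reaches that face in finite time, and how it continues afterwards, depends on how fast the normal component decays---something you do not control. These issues are not fatal, but they are real work that the proposal waves past.

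Bestvina--Brady sidestep all of this with a cell-by-cell PL argument: for every cell $e$ the sets $e\cap X_{I}\subset e\cap X_{I'}$ are convex polytopes, the smaller is non-empty whenever the larger is (this is exactly where the no-vertices hypothesis enters), and one writes an explicit straight-line deformation retraction of one onto the other that is visibly compatible with restriction to faces. This is both more elementary than integrating a flow and immune to the two difficulties above.
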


If $X_{I'}\smallsetminus X_{I}$ contains vertices of $X$, then the topological properties of $X_{I'}$ can be very different from those of $X_I$. This difference is encoded in the ascending and descending links. 

\begin{definition}
The {\em ascending link} of a vertex is 

$Lk_{\uparrow}(v,X) = \bigcup \{{\rm Lk}(w,c)\mid\chi_c(w) = v$ and $w$ is a minimum of $f\chi_c\}\subset {\rm Lk}(v, X)$

The {\em descending link} of a vertex is 

${\rm Lk}_{\downarrow}(v,X) = \bigcup \{{\rm Lk}(w,c)\mid\chi_c(w) = v$ and $w$ is a {\small maximum} of $f\chi_c\}\subset {\rm Lk}(v, X)$
\end{definition}

\begin{thm} [Bestvina-Brady, \cite{bestvina_morse_1997}, Lemma 2.5]\label{homoeq}
Let $f$ be a Morse function. Suppose that $I\subset I'\subset\RR$ are connected, closed and $\min I = \min I'$ (resp. $\max I = \max I')$, and that $I'\smallsetminus I$ contains only one point $r$ of $f\big(X^{(0)}\big)$. Then $X_{I'}$ is homotopy equivalent to the space obtained from $X_I$ by coning off the descending (resp. ascending) links of $v$ for each $v\in f^{-1}(r)$. 
\end{thm}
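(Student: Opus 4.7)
My plan is to reduce to a canonical case using Lemma 2.3 and then describe the change in homotopy type near each vertex of $f^{-1}(r)$ by a Morse-theoretic retraction. I work in the descending case; the ascending case is symmetric.

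\emph{Reduction.} Writing $I = [a, b] \subset I' = [a, b']$ with $b < r \le b'$, pick $\epsilon > 0$ small enough that $r - \epsilon > b$ and $(r - \epsilon, r + \epsilon) \cap f\bigl(X^{(0)}\bigr) = \{r\}$. Then $(b, r - \epsilon]$ and $(r, b']$ contain no vertex images of $f$, so by Lemma 2.3 I may replace $I, I'$ by $[a, r - \epsilon], [a, r]$.

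\emph{Local cones.} Set $V_r := f^{-1}(r) \cap X^{(0)}$. The Morse axiom forbids $f$-constant edges, so every edge at $v \in V_r$ is either ascending (other endpoint has $f > r$) or descending ($f < r$). In any cube $c \ni v$ the descending edges at $v$ span a unique face $c^{\mathrm{desc}}_v$; define $D_v := \bigcup \{c : v \in c,\ c^{\mathrm{desc}}_v = c\}$, the subcomplex of cubes in which $v$ is the maximum of $f|_c$. As a topological space, $D_v \subset X_{I'}$ is the cone on $\mathrm{Lk}_\downarrow(v, X)$ with apex $v$; moreover $D_v \cap X_I = D_v \cap \{f \le r - \epsilon\}$ is $D_v$ minus a small open cone-neighborhood of $v$, which deformation retracts onto $\mathrm{Lk}_\downarrow(v, X)$.

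\emph{Global retraction and conclusion.} Inside each cube $c$ meeting $X_{I'} \setminus X_I$, some vertex $v$ of $c$ lies in $V_r$; the piecewise linear map that linearly sends the ascending coordinates at $v$ to zero retracts $c \cap \{f \le r\}$ onto $c^{\mathrm{desc}}_v \subset D_v$ and stays inside $\{f \le r\}$ throughout (decreasing an ascending coordinate strictly decreases $f$). Gluing these cube-wise retractions yields a deformation retraction of $X_{I'}$ onto $Y := X_I \cup \bigcup_{v \in V_r} D_v$. Since the inclusion $D_v \cap X_I \hookrightarrow D_v$ is, up to homotopy, the inclusion $\mathrm{Lk}_\downarrow(v, X) \hookrightarrow C\mathrm{Lk}_\downarrow(v, X)$, attaching each $D_v$ to $X_I$ along $D_v \cap X_I$ amounts to coning off $\mathrm{Lk}_\downarrow(v, X)$, yielding the claimed homotopy equivalence.

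\emph{Main obstacle.} The delicate step is gluing the per-cube retractions into a single globally defined deformation retraction. Two distinct vertices of $V_r$ cannot be connected by an edge (the Morse axiom forbids it), but they may share a higher-dimensional cube, so the naive choice of ``which ascending coordinates to contract'' is ambiguous on such cubes. The classical technical content of Bestvina--Brady Morse theory is showing that a consistent piecewise linear retraction nonetheless exists, using that the ascending/descending partition of edges at each $v \in V_r$ is intrinsic to $f$ and is compatible with inclusions of cubes.
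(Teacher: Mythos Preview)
The paper does not give its own proof of this statement; it is quoted as Lemma~2.5 of Bestvina--Brady and no argument appears here. There is therefore nothing in the paper to compare your proposal against.

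For what it is worth, your sketch follows the standard Bestvina--Brady line: reduce via the ``no vertices'' lemma to $I=[a,r-\epsilon]$, $I'=[a,r]$; identify each descending star $D_v$ with the cone on $\lk_\downarrow(v,X)$; and retract $X_{I'}$ onto $X_I\cup\bigcup_{v\in V_r} D_v$. One minor slip: it is not true that every cube $c$ meeting $X_{I'}\smallsetminus X_I$ contains a vertex of $V_r$ --- a cube may have its $f$-maximum strictly above $r$ and its $f$-minimum below $r-\epsilon$ with no vertex at height $r$ at all. On such cubes the required retraction is the straightforward ``push down'' already implicit in Lemma~2.3; the coning only occurs for cubes whose $f$-maximum is a vertex of $V_r$. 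Your self-identified obstacle (coherently gluing the cube-wise retractions when several vertices of $V_r$ share a higher-dimensional cube) is exactly the technical content of the original Bestvina--Brady argument and is not resolved in your sketch; but since the paper itself defers entirely to Bestvina--Brady on this point, that is not a deficiency relative to what the paper provides.
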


We can now deduce a lot about the topology of the level and sub-level sets. We know how they change as we pass to larger intervals and so we have the following. 

\begin{cor}[Bestvina-Brady, \cite{bestvina_morse_1997}, Corollary 2.6]\label{cor1} Let $I,I'$ be as above.
\begin{enumerate}
\item If each ascending and descending link is homologically $(n-1)$-connected, then the inclusion $X_I\hookrightarrow X_{I'}$ induces an isomorphism on $H_i$ for $i\leq n-1$ and is surjective for $i=n$.
\item If the ascending and descending links are connected, then the inclusion $X_I\hookrightarrow X_{I'}$ induces a surjection on $\pi_1$. 
\item If the ascending and descending links are simply connected, then the inclusion $X_I\hookrightarrow X_{I'}$ induces an isomorphism on $\pi_1$.
\end{enumerate}
\end{cor}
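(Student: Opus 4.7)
The plan is to deduce the corollary from Theorem \ref{homoeq} by analyzing, in each case, the effect on topology of coning off a subcomplex. Since $f(X^{(0)})$ is discrete in $\RR$, any bounded subinterval of $I' \smallsetminus I$ contains only finitely many critical levels, and unbounded situations are handled by taking a direct limit over exhausting compact subintervals (homology, $\pi_1$ surjectivity, and $\pi_1$ isomorphism all commute with directed colimits of inclusions). Thus I may reduce to the case where $I' \smallsetminus I$ contains a single point $r \in f(X^{(0)})$. By Theorem \ref{homoeq}, up to homotopy,
$$X_{I'} \simeq X_I \cup_{\bigsqcup_{v \in f^{-1}(r)} L_v} \bigsqcup_{v \in f^{-1}(r)} CL_v,$$
where each $L_v$ is the descending link (if $\min I = \min I'$) or ascending link (if $\max I = \max I'$) at $v$, and $CL_v$ denotes its cone.

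For part (1), I would feed this description into the long exact sequence of the pair $(X_{I'}, X_I)$. Since the cones are attached along disjoint subcomplexes, excision and the contractibility of each $CL_v$ give
$$H_i(X_{I'}, X_I) \;\cong\; \bigoplus_{v \in f^{-1}(r)} H_i(CL_v, L_v) \;\cong\; \bigoplus_{v \in f^{-1}(r)} \tilde H_{i-1}(L_v).$$
The hypothesis that each $L_v$ is homologically $(n-1)$-connected makes these groups vanish for $i \leq n$. The long exact sequence then forces $H_i(X_I) \to H_i(X_{I'})$ to be an isomorphism for $i \leq n-1$ and a surjection for $i = n$.

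For parts (2) and (3), I would apply the Seifert--van Kampen theorem. Each $CL_v$ is simply connected, so attaching a single $CL_v$ along $L_v$ amounts, at the level of $\pi_1$, to killing the normal closure of the image of $\pi_1(L_v)$ in $\pi_1(X_I)$. When every $L_v$ is connected, any loop in $L_v$ is nullhomotopic in $CL_v$, so the inclusion induces a surjection on $\pi_1$ (part 2); when every $L_v$ is simply connected, there is nothing to kill and the inclusion induces an isomorphism (part 3).

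The main obstacle is purely bookkeeping rather than conceptual: one must handle the potentially infinite set $f^{-1}(r)$ and the fact that different vertices $v$ at level $r$ may lie in different path components of $X_I$, so that a naive van Kampen argument with a single basepoint does not immediately apply. This is dealt with by first showing that coning off any $L_v$ does not change the set of path components beyond what is forced by the connectedness of the $L_v$, and then attaching cones one $H$-orbit at a time (or using the fundamental groupoid version of van Kampen), so that each step is a straightforward pushout along a connected or simply connected subspace.
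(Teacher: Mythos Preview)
The paper does not supply its own proof of this corollary; it is quoted verbatim from \cite{bestvina_morse_1997} as a known result deduced from Theorem~\ref{homoeq}. Your argument is correct and is essentially the standard derivation: reduce to a single new critical level via discreteness (and direct limits for unbounded intervals), then use excision and the long exact sequence of the pair for part~(1), and van Kampen for parts~(2) and~(3). The bookkeeping issue you flag about infinitely many cones and basepoints is real but, as you say, routine; the groupoid form of van Kampen, or attaching cones one at a time and passing to a colimit, disposes of it.
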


Knowing that the direct limit of this system is a contractible space allows us to compute the finiteness properties of the kernel of $\phi$. 

\begin{theorem}[Bestvina-Brady, \cite{bestvina_morse_1997}, Theorem 4.1]\label{bbmorse}
Let $f\colon X\to \RR$ be a $\phi$-equivariant Morse function and let $H=\ker(\phi)$. If all ascending and descending links are simply connected, then $H$ is finitely presented (i.e. is of type $F_2$).
\end{theorem}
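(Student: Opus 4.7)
The plan is to produce a simply connected, $H$-cocompact cellular complex; its quotient by $H$ will then have $H$ as fundamental group and be a finite CW complex, forcing $H$ to be finitely presented. Specifically, I will show that for any compact interval $J \subset \RR$ the slab $X_J = f^{-1}(J)$ is simply connected and that $X_J/H$ is compact; since $H$ acts freely on $X_J$, the map $X_J \to X_J/H$ is then the universal cover and $\pi_1(X_J/H) \cong H$.

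For simple connectivity of slabs, I would iterate Theorem~\ref{homoeq} and Corollary~\ref{cor1}(3). Since $X$ is CAT(0) it is contractible, in particular $\pi_1(X)=1$. Writing $X = \bigcup_n X_{[-n,n]}$ and extending a slab alternately upward (coning off descending links) and downward (coning off ascending links), each crossing of a critical level in either direction is a $\pi_1$-isomorphism by Corollary~\ref{cor1}(3), because the hypothesis provides simple connectivity of whichever link is being coned off. Passing to the direct limit identifies $\pi_1(X_J)$ with $\pi_1(X)=1$ for every closed bounded interval $J$.

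For cocompactness, pick $g_0 \in G$ with $\phi(g_0)$ a positive generator of $\phi(G) \subset \ZZ$; by $\phi$-equivariance $g_0$ shifts levels by $\phi(g_0)$, and $f$ descends to a map $\bar f \colon X/H \to \RR$. Since $(X/H)/\langle g_0\rangle = X/G$ is compact, $\langle g_0 \rangle$ acts cocompactly on $X/H$ by translating $\bar f$, so $\bar f^{-1}(J) = X_J/H$ is compact for every bounded $J$. The $H$-action on $X_J$ is free and properly discontinuous (inherited from $G$), and cellular after refining the cube structure along the boundary slices (permissible because $f$ is affine on each cube), so $X_J/H$ inherits the structure of a finite CW complex.

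Combining the two steps, $H \cong \pi_1(X_J/H)$ where $X_J/H$ is a finite CW complex, whence $H$ is finitely presented. The conceptual heart is concentrated in Corollary~\ref{cor1}(3): finite generation of $H$ would already follow from mere connectedness of ascending and descending links via Corollary~\ref{cor1}(2), but upgrading to finite presentability requires each critical-level extension to be a $\pi_1$-isomorphism, which is exactly what simple connectivity of both families of links provides.
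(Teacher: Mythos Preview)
Your argument is correct and is essentially the standard Bestvina--Brady argument. Note, however, that the paper does not supply its own proof of Theorem~\ref{bbmorse}: it is quoted from \cite{bestvina_morse_1997} and stated without proof, so there is nothing in the paper to compare your proposal against beyond the ambient tools (Theorem~\ref{homoeq} and Corollary~\ref{cor1}) that you correctly invoke.

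A couple of minor remarks. First, your direct-limit step implicitly uses that each $X_J$ is path-connected; this follows because simply connected links are in particular connected, so Corollary~\ref{cor1}(1) (or the coning description of Theorem~\ref{homoeq}) gives $H_0$-isomorphisms as well, and $X$ itself is connected. Second, the case $\phi\equiv 0$ is degenerate (no $\phi$-equivariant Morse function exists on an unbounded $X$, and if $X$ is bounded then $G$ is finite), so your choice of $g_0$ with $\phi(g_0)$ a positive generator of $\phi(G)$ is always available in the nontrivial situation. With those caveats, your sketch is exactly how the cited result is proved.
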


We would also like to have conditions which will allow us to deduce that $H$ does not satisfy certain other finiteness properties. A well known result in this direction is:

\begin{prop}[Brown, \cite{brown_cohomology_1982}, p. 193]\label{propb}
Let $H$ be a group acting freely, properly, cellularly and cocompactly on a cell complex $X$. Assume further that $\widetilde{H}_i(X,\ZZ)=0$ for $0\leq i\leq n-1$ and that $\widetilde{H}_n(X,\ZZ)$ is not finitely generated as a $\ZZ H$-module. Then $H$ is of type $FP_n$ but not $FP_{n+1}$.
\end{prop}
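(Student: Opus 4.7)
The plan is to exploit the cellular chain complex of $X$ as a chain complex of finitely generated free $\ZZ H$-modules, deduce $FP_n$ from the acyclicity hypothesis, and then obstruct $FP_{n+1}$ by showing that the non-finite-generation of $\widetilde{H}_n(X,\ZZ)$ forces the $(n+1)$-st syzygy to fail to be finitely generated.

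First, I would set up the chain complex. Since $H$ acts freely and cellularly on $X$, the cellular chain group $C_i(X,\ZZ)$ is a free $\ZZ H$-module with one basis element per $H$-orbit of $i$-cells. By cocompactness, there are only finitely many such orbits in each dimension, so each $C_i$ is a finitely generated free $\ZZ H$-module, and the boundary maps are $\ZZ H$-linear.

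Next I would establish type $FP_n$. The hypothesis $\widetilde{H}_i(X,\ZZ)=0$ for $0\le i\le n-1$ says exactly that the augmented chain complex
\[
C_n \xrightarrow{\partial_n} C_{n-1} \xrightarrow{\partial_{n-1}} \cdots \xrightarrow{\partial_1} C_0 \xrightarrow{\varepsilon} \ZZ \to 0
\]
is exact at every position from $\ZZ$ through $C_{n-1}$. This is precisely a partial projective resolution of the trivial $\ZZ H$-module of length $n$ by finitely generated projective (in fact free) $\ZZ H$-modules, so by definition $H$ is of type $FP_n$.

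For the negative part, I would argue by contradiction. Suppose $H$ is of type $FP_{n+1}$. Then there exists some partial projective resolution $Q_{n+1}\to Q_n\to\cdots\to Q_0\to\ZZ\to 0$ with each $Q_i$ finitely generated projective. Comparing this with the partial resolution coming from $C_\ast$ via the generalized Schanuel lemma, one concludes that the $(n+1)$-st syzygy $Z_n := \ker(\partial_n\colon C_n\to C_{n-1})$ differs from a finitely generated $\ZZ H$-module by a direct sum of finitely generated projectives, and is therefore itself finitely generated as a $\ZZ H$-module. But $\widetilde{H}_n(X,\ZZ) = Z_n/\operatorname{im}(\partial_{n+1})$ is a quotient of $Z_n$, so it too would be finitely generated over $\ZZ H$, contradicting the hypothesis. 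Hence $H$ is not of type $FP_{n+1}$.

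The main technical obstacle is the Schanuel-type independence step: one must know that finite generation of the $(n+1)$-st syzygy does not depend on the choice of partial resolution by finitely generated projectives, so that we may substitute our geometric complex $C_\ast(X)$ for an abstract $Q_\ast$. This is a standard consequence of (generalized) Schanuel's lemma together with the fact that adding a finitely generated projective summand preserves finite generation, and it is the one place where one leaves the purely geometric setting for pure homological algebra.
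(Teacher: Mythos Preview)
Your argument is correct and is essentially the standard proof that appears in Brown's book. Note that the paper itself does not give a proof of this proposition; it merely quotes the result from \cite{brown_cohomology_1982} and uses it as a black box, so there is no ``paper's proof'' to compare against. Your write-up supplies exactly the argument Brown gives: the cellular chain complex of $X$ furnishes a partial free resolution of $\ZZ$ by finitely generated $\ZZ H$-modules (from freeness and cocompactness of the action), exactness through degree $n-1$ yields $FP_n$, and the generalized Schanuel lemma shows that $FP_{n+1}$ would force the cycle module $Z_n$ to be finitely generated, whence its quotient $\widetilde{H}_n(X,\ZZ)$ would be as well. The only comment worth making is cosmetic: in the last paragraph you might state explicitly which version of Schanuel you invoke (the ``dimension-shifting'' form: given two partial resolutions of the same module by finitely generated projectives of the same length, the two kernels are stably isomorphic), since that is the precise ingredient needed.
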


In \cite{brady_branched_1999}, the above result was used to prove that a certain group is not of type $FP_3$. However in our theorems not all the links will satisfy the assumptions of \cite[Theorem 4.7]{brady_branched_1999} and we require the following.

\begin{thm}[Kropholler, \cite{kropholler_hyperbolic_2015}, Theorem 2.23]\label{notfp}
Let $f\colon X\to \RR$ be a $\phi$-equivariant Morse function and let $H=\ker(\phi)$. Suppose that for all vertices $v$ the reduced homology of ${\rm Lk}_{\uparrow}(v)$ and ${\rm Lk}_{\downarrow}(v)$ is 0 in dimensions $0, \dots, n-1$ and $n+1$. Further assume that there is a vertex $v'$ such that $\widetilde{H}{_n}\big({\rm Lk}_{\uparrow}(v')\big)\neq 0$ or $\widetilde{H}{_n}\big({\rm Lk}_{\downarrow}(v')\big)\neq 0$ (possibly both). Then $H$ is of type $FP_n$ but not of type $FP_{n+1}$.
\end{thm}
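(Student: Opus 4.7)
The plan is to apply (the homological form of) Brown's criterion, Proposition \ref{propb}, to the $H$-cocompact subcomplex $Y = X_{[-t,t]}$ for large $t$; the $G$-cocompactness of $X$ together with the $\phi$-equivariance of $f$ makes the $H$-action on $Y$ free, proper, cellular, and cocompact. For the $FP_n$ conclusion I would check that $\tilde H_i(Y) = 0$ for $0 \le i \le n-1$, working one vertex at a time. By Theorem \ref{homoeq} each crossing replaces $X_I$ by $X_{I'} \simeq X_I \cup_L CL$, where $L$ is an ascending or descending link, and the Mayer--Vietoris sequence
\[
\cdots \to \tilde H_i(L) \to \tilde H_i(X_I) \to \tilde H_i(X_{I'}) \to \tilde H_{i-1}(L) \to \cdots
\]
combined with $\tilde H_i(L) = 0$ for $i \le n-1$ shows the inclusion is an isomorphism on $\tilde H_i$ for $i \le n-2$ and a surjection for $i = n-1$. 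Since $X = \varinjlim_t Y_t$ is contractible, an induction over vertex crossings forces $\tilde H_i(Y) = 0$ for $i \le n-1$, whence $H$ is of type $FP_n$ by Proposition \ref{propb}.

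For the non-$FP_{n+1}$ conclusion, I would exhibit enough classes in $\tilde H_n(Y)$ to prevent it from being finitely generated as a $\ZZ H$-module. The vertex $v'$ yields a nonzero class in $\tilde H_n(L_{v'})$, and the hypothesis $\tilde H_{n+1}(L) = 0$ at every vertex truncates the above Mayer--Vietoris to the short exact form
\[
0 \to H_{n+1}(X_I) \to H_{n+1}(X_{I'}) \to \tilde H_n(L) \to H_n(X_I) \to H_n(X_{I'}) \to 0,
\]
so crossing a vertex can either lift a class of $\tilde H_n(L)$ into $H_{n+1}$ or kill classes of $H_n$, but does not conflate these two mechanisms. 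For every height $r \in \phi(G)$ there is a $G$-translate of $v'$ at height $r$ whose link is isomorphic to $L_{v'}$, giving infinitely many $H$-orbits of ``bad'' vertices. Tracking their contributions through the successive surgeries as $t$ grows shows that $\tilde H_n(Y)$ cannot be generated by finitely many $\ZZ H$-orbits; Proposition \ref{propb} then yields the failure of $FP_{n+1}$.

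The technical core, and the main obstacle, is the second step. Although $C_\ast(Y)$ is finitely generated over $\ZZ H$, its $H_n$ may still fail to be finitely generated over $\ZZ H$, and what must be ruled out is the possibility that the independent contributions from distinct heights cancel via some $\ZZ H$-relation in $H_n$. The hypothesis $\tilde H_{n+1}(L) = 0$ is imposed precisely to forbid such cancellations at the Mayer--Vietoris level: without it, the boundary map from $H_{n+1}(X_{I'})$ could relate two independent $H_n$-classes sitting at different heights and collapse the count. Making this rigorous requires careful bookkeeping through the Morse-theoretic filtration (or the equivalent spectral sequence of the filtration by $Y_t$), which is the content of \cite[Theorem 2.23]{kropholler_hyperbolic_2015}.
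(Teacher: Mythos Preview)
The paper does not contain a proof of this theorem: it is merely stated and attributed to \cite[Theorem~2.23]{kropholler_hyperbolic_2015}, so there is no in-paper argument to compare against.

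Your sketch is the standard route and is essentially correct as an outline. The $FP_n$ half is unproblematic: the link hypotheses in degrees $0,\dots,n-1$ feed directly into Corollary~\ref{cor1}(1), giving $(n-1)$-acyclicity of every $X_I$, and then Proposition~\ref{propb} applies. One small point: Theorem~\ref{homoeq} cones off all links at a given height simultaneously, not one vertex at a time; your ``one vertex at a time'' phrasing is harmless because distinct ascending (or descending) links at the same height are disjoint, but it is worth saying so explicitly.

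For the non-$FP_{n+1}$ half you have correctly isolated the issue. Your five-term sequence is right, and the hypothesis $\tilde H_{n+1}(L)=0$ is precisely what prevents new $(n+1)$-classes from appearing whose boundaries could identify $n$-classes born at different heights. What remains is to turn the heuristic ``infinitely many $H$-orbits of bad vertices contribute independently'' into an actual proof that $\tilde H_n(X_I)$ is not finitely generated over $\ZZ H$; you acknowledge this and defer to the cited reference, which is exactly what the present paper does. So your proposal matches the paper's treatment: state the mechanism, flag the delicate step, and cite \cite{kropholler_hyperbolic_2015} for the bookkeeping.
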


Finally, we prove a key theorem regarding ascending and descending links. 

\begin{thm}\label{prodmorse}
For $i = 1, 2$ let $X_i$ be a CAT(0) cube complex, $G_i$ a group acting freely, properly and cocompactly on $X_i$, $\phi_i\colon G_i\to \ZZ$ a surjective homomorphism and $f_i\colon X_i\to \RR$ a $\phi_i$-equivariant Morse function. Then, there is a Morse function $f\colon X_1\times X_2\to \RR$ such that $${\rm Lk}_{\uparrow}\big((v_1, v_2), X_1\times X_2\big) = {\rm Lk}_{\uparrow}(v_1, X_1)\ast {\rm Lk}_{\uparrow}(v_2, X_2)$$ and $${\rm Lk}_{\downarrow}\big((v_1, v_2), X_1\times X_2\big) = {\rm Lk}_{\downarrow}(v_1, X_1)\ast {\rm Lk}_{\downarrow}(v_2, X_2)$$.
\end{thm}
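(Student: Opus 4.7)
The plan is to define the Morse function in the obvious coordinate-wise way and exploit the product cube structure of $X_1 \times X_2$. Set $\phi \colon G_1 \times G_2 \to \ZZ$ by $\phi(g_1, g_2) = \phi_1(g_1) + \phi_2(g_2)$ and define
\[ f(x_1, x_2) \;=\; f_1(x_1) + f_2(x_2). \]
First I would verify the three Morse conditions. Every cube of $X_1 \times X_2$ factors uniquely as a product $c_1 \times c_2$ of cubes $c_i \subset X_i$, and $f \chi_{c_1 \times c_2} \colon [0,1]^{n_1+n_2} \to \RR$ splits as $f_1 \chi_{c_1}(t) + f_2 \chi_{c_2}(s)$ in disjoint sets of variables; this is a sum of two affine maps, hence extends to an affine map on $\RR^{n_1+n_2}$, and is constant iff both summands are, i.e.\ iff $n_1 = n_2 = 0$. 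For discreteness of $f((X_1 \times X_2)^{(0)})$, cocompactness of each action together with $\phi_i$-equivariance implies that $f_i(X_i^{(0)})$ is a finite union of cosets of $\ZZ$ in $\RR$; the Minkowski sum of two such sets is again a finite union of cosets of $\ZZ$, hence discrete. Equivariance of $f$ with respect to $\phi$ is immediate.

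Next I would compute the ascending and descending links at a vertex $(v_1, v_2)$. A cube containing $(v_1, v_2)$ as a corner factors uniquely as $c_1 \times c_2$ with $v_i$ a corner of $c_i$. Because $f \chi_{c_1 \times c_2}$ is affine and separates the two blocks of variables, the corner $(v_1, v_2)$ is a minimum (resp.\ maximum) of $f \chi_{c_1 \times c_2}$ if and only if $v_i$ is a minimum (resp.\ maximum) of $f_i \chi_{c_i}$ for both $i$. Combined with the standard identification ${\rm Lk}((v_1,v_2), c_1 \times c_2) = {\rm Lk}(v_1, c_1) * {\rm Lk}(v_2, c_2)$ and the distributivity of join over union, this yields
\[ {\rm Lk}_\uparrow\bigl((v_1, v_2), X_1 \times X_2\bigr) \;=\; \bigcup_{c_1, c_2} {\rm Lk}(v_1, c_1) * {\rm Lk}(v_2, c_2) \;=\; {\rm Lk}_\uparrow(v_1, X_1) * {\rm Lk}_\uparrow(v_2, X_2), \]
where the union ranges over pairs $(c_1, c_2)$ with $v_i$ an ascending corner of $c_i$; the descending case is identical with minima replaced by maxima.

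The main point requiring care is the degenerate case in which one of the $c_i$ is the single vertex $\{v_i\}$: one must use the convention ${\rm Lk}(v_i, \{v_i\}) = \emptyset$ together with $A * \emptyset = A$, so that cubes of the form $\{v_1\} \times c_2$ and $c_1 \times \{v_2\}$ contribute the ``pure'' factors ${\rm Lk}_\uparrow(v_2, X_2)$ and ${\rm Lk}_\uparrow(v_1, X_1)$ to the join. Once this bookkeeping is in place the identification of the links is essentially a symbolic expansion of the join distributivity, and no further geometric input is needed.
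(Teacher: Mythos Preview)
Your argument is correct and complete; the Morse function you define is the same one the paper uses, and your verification of the three Morse conditions is in fact more explicit than the paper's (which simply asserts them).  Where you diverge is in the computation of the ascending and descending links.

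The paper proceeds via an auxiliary lemma (Lemma~\ref{asclink}): the ascending link of $v$ is the \emph{full} subcomplex of ${\rm Lk}(v,X)$ spanned by its ascending vertices.  Granting this, the product case follows in one line: ${\rm Lk}((v_1,v_2),X_1\times X_2)={\rm Lk}(v_1,X_1)\ast{\rm Lk}(v_2,X_2)$, the ascending vertices are exactly ${\rm Lk}_\uparrow(v_1,X_1)^{(0)}\sqcup{\rm Lk}_\uparrow(v_2,X_2)^{(0)}$, and the full subcomplex on this set inside a join is the join of the full subcomplexes.  You instead work directly from the cube-by-cube definition, factoring each product cube, observing that $(v_1,v_2)$ is an $f$-minimum on $c_1\times c_2$ iff each $v_i$ is an $f_i$-minimum on $c_i$, and then invoking distributivity of join over union (with the degenerate $c_i=\{v_i\}$ case handled via $A\ast\emptyset=A$).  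This is a legitimate and slightly more elementary route: it avoids proving the full-subcomplex lemma altogether.  The trade-off is that the paper's lemma is independently useful---it reduces later link computations to identifying a set of vertices---whereas your argument is self-contained but specific to this theorem.
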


We will prove the staement for ascending links, the proof for descending links is the same. The proof really relies on the following key lemma. 

\begin{lem}\label{asclink}
Let $X$ be a CAT(0) cube complex with Morse function $f$. Then ${\rm Lk}_{\uparrow}(v, X)$ is the full subcomplex of ${\rm Lk}(v, X)$ spanned by the vertices in ${\rm Lk}_{\uparrow}(v, X)$. 
\end{lem}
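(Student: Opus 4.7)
The plan is to identify the vertices of $\lk_{\uparrow}(v,X)$ with the edges at $v$ along which $f$ strictly increases, and then argue that for any cube $c$ containing $v$ this local edge-wise information determines whether $v$ itself is the minimum of $f\chi_c$ on $c$. Once this is established, a simplex $\sigma$ of $\lk(v,X)$ lying inside $\lk_{\uparrow}(v,X)$ is equivalent to every vertex of $\sigma$ lying in $\lk_{\uparrow}(v,X)$, which is precisely the condition for $\lk_{\uparrow}(v,X)$ to be a full subcomplex.

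In detail, first I would record the following. Because the characteristic maps $\chi_c$ in a CAT(0) cube complex are injective on the closed cube, each simplex of $\lk(v,X)$ corresponds to a unique pair $(w,c)$ with $\chi_c(w)=v$. A vertex of $\lk(v,X)$ sitting inside $\lk_{\uparrow}(v,X)$ is by definition an edge at $v$ along which $f$ attains its minimum at $v$, i.e.\ an edge at $v$ along which $f$ strictly increases (non-constancy on $1$-cubes follows from the Morse condition). Then I would prove the key claim: if $c$ is an $n$-cube and $w$ is a corner of $[0,1]^n$, then $w$ is a minimum of the affine function $f\chi_c$ on $[0,1]^n$ if and only if $f\chi_c$ strictly increases along every edge of $[0,1]^n$ emanating from $w$. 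Writing $f\chi_c(x)=a_0+\sum_i a_i x_i$, the Morse hypothesis applied to each $1$-dimensional face forces $a_i\neq 0$ for all $i$; setting $s_i=+1$ if $w_i=0$ and $s_i=-1$ if $w_i=1$, the edge from $w$ in direction $s_i e_i$ changes $f\chi_c$ by $s_i a_i$, so the ascending condition at $w$ is exactly $s_i a_i>0$ for all $i$. A direct computation then gives
\[
f\chi_c(y)-f\chi_c(w)=\sum_i a_i(y_i-w_i)=\sum_i s_i a_i\,|y_i-w_i|\geq 0
\]
for every $y\in[0,1]^n$, with equality only when $y=w$; hence $w$ is the (unique) minimum of $f\chi_c$.

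To conclude, let $\sigma$ be a simplex of $\lk(v,X)$, corresponding to some cube $c$ with corner $w$ mapped to $v$. By the definition of the ascending link, $\sigma\subset\lk_{\uparrow}(v,X)$ iff $w$ is a minimum of $f\chi_c$, and by the key claim this happens iff $f\chi_c$ strictly increases along every edge at $w$, i.e.\ iff every vertex of $\sigma$ lies in $\lk_{\uparrow}(v,X)$. This is exactly the statement that $\lk_{\uparrow}(v,X)$ is the full subcomplex of $\lk(v,X)$ spanned by its vertices. The only step that requires any genuine care is the key claim above, and even there the single real input is the fact that the linear coefficients of an affine Morse function on a cube are all nonzero; with that in hand the argument is just the observation that an affine function on a cube with all-positive directional derivatives at a vertex is minimised there.
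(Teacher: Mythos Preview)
Your proof is correct and follows essentially the same approach as the paper: reduce to showing that if all edges of a cube $c$ at the corner $w$ point upward then $w$ is the minimum of $f\chi_c$. The only difference is stylistic---the paper argues geometrically via the foliation of $c$ by affine level sets (noting that exactly two level hyperplanes meet $c$ in a single vertex, namely the minimum and maximum), whereas you write $f\chi_c(x)=a_0+\sum_i a_i x_i$ and verify the minimality of $w$ by the direct inequality $\sum_i s_i a_i\,|y_i-w_i|\geq 0$. Your explicit computation is arguably cleaner and makes transparent exactly where the Morse hypothesis (each $a_i\neq 0$) enters.
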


At first sight this does not appear to be an improvement, however, it allows for simple calculation of the ascending and descending links once the link of a vertex is known. 

\begin{proof}
Let $v_1, \dots, v_n$ be $n$ pairwise adjacent vertices in ${\rm Lk}_{\uparrow}(v, X)$, proving that the simplex $[v_1, \dots, v_n]$ is in ${\rm Lk}_{\uparrow}(v, X)$ will prove the claim. 

Let $e_i$ be the edge in $X$ corresponding to $v_i$. We must prove that $v$ is a minimum for $f$ restricted to $c = e_1\times\dots\times e_n$. We note that since $f$ extends to an affine map, $c$ is foliated by level sets $f^{-1}(t)$ for $t\in\RR$. Each of these level sets corresponds to a linear subspace of dimension $n-1$ not containing any subcubes of dimension $>0$. When intersected with the cube $c$ there are exactly two subspaces where the intersection is a single vertex. One of these corresponds to the minimum of $f|_c$ and the other to the maximum of $f|_c$. One of these must be at the vertex mapping to $v$ and this is the minimum. 
\end{proof}

\begin{proofofthm}{prodmorse}
We define a Morse function $f\colon X_1\times X_2\to \RR$ by $f(x_1, x_2) = f_1(x_1)+f_2(x_2)$. This satisfies all the conditions of being a Morse function and is $\phi$-equivariant with respect to the map $\phi(g_1, g_2) = \phi_1(g_1)+\phi_2(g_2)$. 

The link of a vertex $(v_1, v_2)$ in $X_1\times X_2$ is ${\rm Lk}(v_1, X_1)\ast {\rm Lk}(v_2, X_2)$. It is easy to see that the ascending link is the full subcomplex spanned by ${\rm Lk}_{\uparrow}(v_1, X_1)\sqcup {\rm Lk}_{\uparrow}(v_2, X_2)\subset {\rm Lk}(v_1, X_1)\ast {\rm Lk}(v_2, X_2)$, which is equal to ${\rm Lk}_{\uparrow}(v_1, X_1)\ast {\rm Lk}_{\uparrow}(v_2, X_2)$.
\end{proofofthm}

\subsection{Groups of type $F_{n-1}$ but not $F_{n}$}

In \cite{bestvina_problem}, Question 8.5 Brady asks whether there exist groups of type $F_{n-1}$ but not $F_{n}$ which do not contain $\ZZ^2$; he notes that the known examples all contain $\ZZ^{n-2}$. While we are not able to find examples without $\ZZ^2$, we can drastically reduce the rank of a free abelian subgroup as shown by the following. 

\begin{customthm}{B}
For every positive integer $n$, there exists a group of type $F_{n-1}$ but not $F_{n}$ that contains no abelian subgroups of rank greater than $\lceil \frac{n}{3}\rceil$.
\end{customthm}
\begin{proof}
For our general construction, we require 4-tuples $(G_i, X_i, \phi_i, f_i)$ for $i = 1, 2, 3$, where:
\begin{itemize}
\item $G_i$ is a hyperbolic group, 
\item $X_i$ is a CAT(0) cube complex with a free cocompact $G_i$ action, 
\item $\phi_i:G_i\to\ZZ$ is a surjective homomorphism,
\item $f_i$ is a $\phi_i$-equivariant Morse function.
\end{itemize}

We also require that all ascending and descending links of $f_i$ are $(i-2)$-connected but at least one is not $(i-1)$-connected. 

The existence of such a Morse function would show by Theorem \ref{notfp} that $G_i $ has a subgroup of type $F_{i-1}$ not $F_i$, namely $\ker(\phi_i)$. 

Let $G_1$ be a free group of rank 2, $X_1$ a 4-valent tree (the Cayley graph of $G_i$ with respect to generators $a$ and $b$), $\phi_1$ the exponent sum homomorphism with respect to $a$ and $b$, and $f_1$ the map that is linear on edges and whose restriction to the vertices of $X_1$ is $\phi_1$.

Let $(G_2, X_2, \phi_2, f_2)$ be the group, classifying space, homomorphism and Morse function from Proposition \ref{hypf1}.

Let $(G_3, X_3, \phi_3, f_3)$ be the group, classifying space, homomorphism and Morse function from \cite{brady_branched_1999}. We could also use the examples of groups from \cite{kropholler_hyperbolic_2015, lodha_hyperbolic_2017}.

An important point to note is that for $i=1,2,3$, the ascending and descending links for $(G_i, X_i, \phi_i, f_i)$ are isomorphic to $S^{i-1}$. 

Now let $n$ be an integer, let $l = \lfloor\frac{n}{3}\rfloor$, and let $(G_l, X_l, \phi_l, f_l)$ be the 4-tuple with  
$$G_l  = \prod_{i=1}^l G_3, \mbox{ }X_l  = \prod_{i=1}^l X_3, $$
$$\phi_l  = \sum_{i=1}^l\phi_3,\mbox{ } f_l  = \sum_{i=1}^l f_3.$$

By Theorem \ref{prodmorse} the ascending and descending links of $f_l$ are $\ast_{i=1}^l S^2 = S^{3l-1}$ which is $(3l-2)$-connected but not $(3l-1)$-connected. If $n\equiv 0\mod 3$, then $3l=n$, and we define $G = G_l$. If not, let $m$ be the residue of $n\mod 3$ and consider the 4-tuple $(G = G_l\times G_m, X = X_l\times X_m, \phi = \phi_l+\phi_m, f = f_l+f_m)$. The $\phi$-equivariant Morse function $f$ on the cube complex $X$ has ascending and descending links that are all copies of $S^{n-1}$. In all cases, $Ker(\phi)$ is of type $F_{n-1}$ not $F_n$  by Theorem \ref{notfp}. 

The group $G$ contains no free abelian subgroups of rank greater than $\lceil \frac{n}{3}\rceil$ since $G_i$ as a hyperbolic group contains no copy of $\ZZ^2$. 
\end{proof}

\section{Subgroups of type $FP_2$ that cannot be finitely presented}

We will now apply our branched covering technique with carefully chosen flag complexes $\Gamma$ to prove the following.

\begin{customthm}{A}
There exists a non positively curved space $X$ such that $\pi_1(X)$ contains no subgroups isomorphic to $\ZZ^3$, which contains a subgroup of type $FP_2$ not $F_2$. 
\end{customthm}

We will do this using the following steps. 

\begin{enumerate}
\item Start with a connected 2-dimensional tripartite flag complex $L$ such that $\pi_1(L)$ is a perfect group with the the link of every vertex connected and not a point. Take an auxiliary complex $\Gamma  = \mathcal{R}(L)$. Then build the complex $K_{\Gamma}$ in Section \ref{construct}.
\item Define a function $f:K_{\Gamma}\to S^1$ which lifts to a Morse function on universal covers. Examine the ascending and descending links of this Morse function.
\item Take a branched covering of $K_{\Gamma}$ as in Section \ref{almosthyp} to get a complex $X$ with an associated Morse function. Examine the ascending and descending links of this Morse function. This will show that the kernel of $f_*$ is of type $FP_2$. 
\item Prove that the kernel of $f_*$ is not finitely presented. 
\end{enumerate}

\subsection{The complex $\mathcal{R}(L)$}

The construction of this complex is required, the key point of this complex is that it satisfies Proposition \ref{simpcon1}. This is required to make sure that the fundamental group of the links is not changed in the branching process. 

Firstly, we prove that none of the assumptions from the first step are restrictive. We can realise any finitely presented group as the fundamental group of a finite connected 2-dimensional simplicial complex. It is also well known that the barycentric subdivision of a 2-dimensional simplicial complex is flag and we can put an obvious tripartite structure, on the barycentric subdivision, labelling vertices by the dimension of the corresponding cell. So we can realise any group as the fundamental group of a connected 2-dimensional tripartite flag complex. 

Given a connected 2-dimensional tripartite flag complex there is a homotopy equivalent complex of the same form such that the link of every vertex is connected and not a point. 

To see this, first note that if there is a vertex where the link is just a single point, then we can contract this edge without changing the homotopy type of the complex. Next, note that if there is a vertex $x$ with disconnected link, then we perform the following procedure: pick two vertices $v$, $w$ in two different components of the link, add an extra vertex $y$ and connect it to $v,w$ and $x$ while also adding two triangles $[v,y,x]$ and $[u,y,x]$. The result is that we have reduced the number of components of the link at $x$, without adding any extra components to the link of $v$ or $w$, and the link of $y$ is connected. We have also not changed the homotopy type, since we have added a contractible space glued along a contractible subspace. Repeating this procedure, we can make sure that the link of every vertex is connected. 

\begin{definition}
For a simplicial complex $L$ the {\em octahedralisation} $S(L)$ is defined as follows. For each vertex $v$ of $L$, let $S^0_v = \{v^+, v^-\}$ be a copy of $S^0$. For every simplex $\sigma$ of $L$ take $S_{\sigma} = \ast_{v\in \sigma}S^0_v$. If $\tau<\sigma$, then there is a natural map $S_{\tau}\to S_{\sigma}$. $S(L) = \cup_{\sigma< L} S_{\sigma}/\sim$, where the equivalence relations $\sim$ is generated by the inclusions $S_{\tau}\to S_{\sigma}.$
\end{definition}

The complex $S(L)$ can also be seen as the link of the vertex in the Salvetti complex for the RAAG defined by $L$. If $L$ has an $n$-partite structure then there is a natural $n$-partite structure on $S(L)$. Let $L$ be contained in $V_1\ast\dots\ast V_n$. Then $S(L)$ is contained in $S(V_1)\ast\dots\ast S(V_n)$. 

\begin{rem}
The map defined by $S^0_v\to \{v\}$ extends to a retraction of $S(L)$ to $L$ (not a deformation retraction). 
\end{rem}

In particular, if $\pi_1(L)\neq 0$, then $\pi_1(S(L))\neq 0$. 

It is proved in \cite{bestvina_morse_1997} that if $L$ is a flag complex, then $S(L)$ is a flag complex. 

\begin{lem}\label{trivh1}
Assume $L$ is a connected simplicial complex and $\lk(v, L)$ is connected and not equal to a point for all vertices $v\in L$. If $H_1(L) = 0$, then $H_1(S(L)) = 0$. 
\end{lem}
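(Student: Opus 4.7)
The plan is to work at the chain level and exploit the natural retraction $r\colon S(L)\to L$ defined on vertices by $v^{\pm}\mapsto v$. On simplicial chains the induced map $r_*\colon C_*(S(L))\to C_*(L)$ is a chain map admitting the section $i_*\colon C_*(L)\to C_*(S(L))$ coming from $v\mapsto v^+$, with $r_*\circ i_*=\mathrm{id}$. This splits the chain complex as $C_*(S(L))=i_*C_*(L)\oplus K_*$, with $K_*:=\ker r_*$ a subchain complex, so $H_*(S(L))=H_*(L)\oplus H_*(K_*)$. Since the hypothesis gives $H_1(L)=0$, the lemma reduces to showing $H_1(K_*)=0$.

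Next, I analyse $K_*$ cell by cell. Over each edge $e=[v,w]$ of $L$, $K_1$ has rank $3$, with convenient generators
\[
b_e := [v^+,w^+]-[v^-,w^+],\ c_e := [v^+,w^+]-[v^+,w^-],\ z_e := [v^+,w^+]-[v^-,w^+]+[v^-,w^-]-[v^+,w^-];
\]
the last is the $4$-cycle on the four lifts of $e$. Identifying $K_0\cong\ZZ[V(L)]$ via $v\leftrightarrow v^+-v^-$, one has $\partial b_e=-v$, $\partial c_e=w$ and $\partial z_e=0$. The link hypothesis (connected and not a point) guarantees that every edge of $L$ lies in at least one $2$-simplex: indeed $\lk(v,L)$ having $\ge 2$ vertices and being connected has no isolated vertex, so every neighbour $w$ of $v$ is adjacent in $\lk(v,L)$ to another neighbour $u$. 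For such $[u,v,w]\in L$, the subcomplex $S([u,v,w])\subset S(L)$ is an octahedral $S^2$, and the sum of its four triangles through $u^+$ is a $2$-chain in $K_2$ whose boundary is exactly $z_e$; hence every $z_e$ is a boundary in $K_*$.

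To kill cycles built from $b$'s and $c$'s I extract relations from further $2$-chains in $K_2$. For each $2$-simplex $[u,v,w]$ of $L$ with $u<v<w$ in a fixed vertex ordering, direct computation yields
\begin{align*}
\partial\bigl([u^+,v^+,w^+]-[u^-,v^+,w^+]\bigr)&=b_{[u,v]}-b_{[u,w]},\\
\partial\bigl([u^+,v^+,w^+]-[u^+,v^+,w^-]\bigr)&=c_{[v,w]}-c_{[u,w]},\\
\partial\bigl([u^+,v^+,w^+]-[u^+,v^-,w^+]\bigr)&=b_{[v,w]}+c_{[u,v]},
\end{align*}
so $b_{[u,v]}\equiv b_{[u,w]}$, $c_{[v,w]}\equiv c_{[u,w]}$ and $b_{[v,w]}\equiv-c_{[u,v]}$ modulo $\partial_2 K_2$. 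Because each $\lk(x,L)$ is connected, any two edges of $L$ meeting at a common vertex $x$ can be chained through a sequence of $2$-simplices at $x$, and these local identifications propagate throughout the star of $x$. After all such identifications, the quotient $\ker(\partial_1|_{K_1})/\partial_2K_2$ becomes a quotient of $H_1(L)$, which vanishes by hypothesis; so $H_1(K_*)=0$ and hence $H_1(S(L))=0$.

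The hardest step is the last one: tracking signs and orientations consistently across all $2$-simplices, and using the connectedness of each link (not merely the existence of a single $2$-simplex through each edge) to glue the per-$2$-simplex relations into a well-defined global reduction to $H_1(L)$. The chain splitting, the octahedral hemisphere argument for $z_e$, and the individual local relations are all routine computations.
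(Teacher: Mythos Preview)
Your approach is genuinely different from the paper's. The paper covers $S(L)$ by the open stars $N^+,N^-$ of the two copies $L^\pm\subset S(L)$ and applies Mayer--Vietoris, reducing everything to showing that $N^+\cap N^-$ is connected; you instead split the chain complex via the retraction $r$ and compute $H_1(K_*)$ directly. Your setup and the intermediate computations are correct: the basis $b_e,c_e,z_e$ for $K_1$ over an edge, the hemisphere chain in an octahedron bounding $z_e$, and the three local relations coming from a $2$-simplex all check out exactly as written.

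The final paragraph, however, has a real gap. You assert that after the local identifications $\ker(\partial_1|_{K_1})/\partial_2K_2$ ``becomes a quotient of $H_1(L)$'', but you never produce a map from $H_1(L)$ or any argument for this; as written it is an assertion, not a proof. In fact the honest completion of your own argument gives something stronger and does not invoke $H_1(L)=0$ at this stage at all. Set $\tilde h_{x,y}:=b_{[x,y]}$ when $x<y$ and $\tilde h_{x,y}:=-c_{[y,x]}$ when $x>y$, so that $\partial\tilde h_{x,y}=-(x^+-x^-)$ in every case. Your three relations say precisely that $\tilde h_{x,y}\equiv\tilde h_{x,z}$ in $K_1/\partial_2K_2$ whenever $y,z$ are adjacent in $\lk(x,L)$; connectedness of $\lk(x,L)$ then gives a well-defined class $\eta_x$ with $b_e\equiv\eta_{v_e}$ and $c_e\equiv-\eta_{w_e}$ for each $e=[v_e,w_e]$, $v_e<w_e$. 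Now take a cycle $\gamma=\sum_e(\alpha_e b_e+\beta_e c_e)$ (the $z_e$ already being boundaries). The cycle condition reads $\sum_{v_e=x}\alpha_e=\sum_{w_e=x}\beta_e$ for each vertex $x$, so in $K_1/\partial_2K_2$ one gets
\[
\gamma\equiv\sum_x\eta_x\Bigl(\sum_{v_e=x}\alpha_e-\sum_{w_e=x}\beta_e\Bigr)=0.
\]
Hence $H_1(K_*)=0$, and only now does $H_1(L)=0$ enter, via $H_1(S(L))=H_1(L)\oplus H_1(K_*)$. You should replace the vague appeal to $H_1(L)$ in the last paragraph with this computation.
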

\begin{proof}
Let $L^+$ be the full subcomplex of $S(L)$ spanned by the set $\{v^+:v\in L\}$, and let $L^-$ be defined similarly. Let $N^+$ be the interior of the star of $L^+$ and $N^-$ the interior of the star of $L^-$. It is clear that $S(L)$ is contained in $N^-\cup N^+$. We can also see that $N^+\cap N^-$ is the union of the open simplices which are contained in neither $L^+$ nor $L^-$. Here we are using the fact that $N^+$ is homotopy equivalent to $L$ and similarly $N^-$ is homotopy equivalent to $L$. 

By considering the Mayer-Vietoris sequence for $N^-$ and $N^+$ we see that $H_1(S(L))$ is isomorphic to $\tilde{H}_0(N^+\cap N^-)$, so we must prove that $N^+\cap N^-$ is connected. 

Let $x, y$ be points of $N^+\cap N^-$. We can always connect $x$ and $y$ to open edges contained in $N^+\cap N^-$; label these edges $e_x$ and $e_y$. Let $v_x$ be the end point of $e_x$ in $L^+$ and $w_x$ the end point in $L^-$; define vertices $v_y$ and $w_y$ similarly. Let $\mathfrak{v} = (v_x = v_0, v_1, \dots, v_n = v_y)$ be a sequence of vertices in $L^+$ corresponding to a geodesic in $L^{(1)}$ from $v_x$ to $v_y$. 

The vertices $w_x$ and $w_y$ have corresponding vertices in $L^+$, these are adjacent to $v_x$ and $v_y$ respectively. We split into 4 cases by whether these vertices are on the geodesic $\mathfrak{v}$. We will define a path $P$ in each of these four cases. 
\begin{enumerate}
\item If $w_x = v_1$ and $w_y = v_{n-1}$, then let $P = v_0, v_1, \dots, v_n$.
\item If $w_x = v_1$ and $w_y \neq v_{n-1}$, then let $P = v_0, v_1, \dots, v_n, w_y$.
\item If $w_x \neq v_1$ and $w_y = v_{n-1}$, then let $P = w_x, v_0, v_1, \dots, v_n$.
\item If $w_x \neq v_1$ and $w_y \neq v_{n-1}$, then let $P = w_x, v_0, v_1, \dots, v_n, w_x$.
\end{enumerate}

We will now describe how to get a sequence of edges $e_x = e_0, \dots, e_m = e_y$ such that $e_i, e_{i+1}$ are on a 2-simplex in $L$. The corresponding sequence of edges in $N^+\cap N^-$ will give a path from $e_x$ to $e_y$. Thus completing the proof of the lemma. 

A path in the link of a vertex $v$ can be viewed as a sequence of edges in $L$ such that adjacent edges are on a 2-simplex.  

We will relabel the path $P$ to $a_1, b_1, \dots$. Let $A_i$ be a path in the link of $b_i$ from $a_i$ to $a_{i+1}$. Let $B_i$ be a path in the link of $a_i$ from $b_i$ to $b_{i+1}$. This can be done since the link of every vertex is connected. The sequence of edges $A_1, B_2, A_2, \dots$ defines the sequence of edges we require. The key idea is encapsulated in Figure \ref{connlinkgame}, where the curved arcs correspond to the paths $A_i$, $B_i$. 
\end{proof}

\begin{figure}
\center
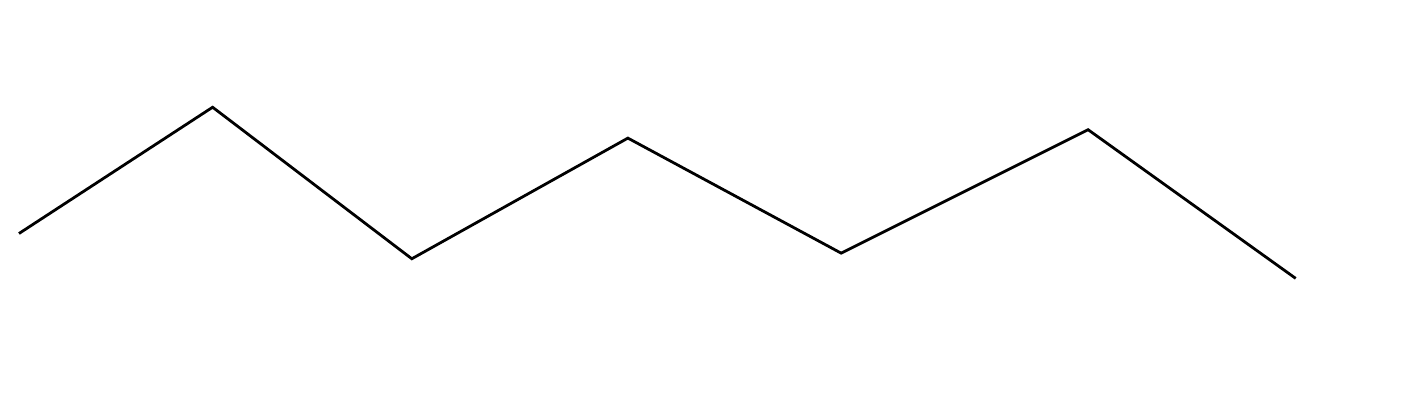
\caption{The key idea from Lemma \ref{trivh1}.}
\label{connlinkgame}
\end{figure}

We can now define the complex $\mathcal{R}(L)$. 

\begin{enumerate}
\item Let $L$ be a 2-dimensional tripartite flag complex with $\pi_1(L)$ perfect, such that the link of every vertex in $L$ is connected and not a point. Label the sets of vertices from the tripartite structure $V_0, V_1, V_2$.
\item Construct $S(L)$ as above. This is a 2-dimensional tripartite flag complex with $\pi_1(L)$ perfect. 
\item Add 3 extra vertices $v^0, v^1, v^2$ which are of type $0, 1,2$ respectively. Connect $v^i$ to all the vertices not of type $i$. Define $\mathcal{R}(L)$ to be the flag completion of the resulting complex. 
\end{enumerate}

Take a simplicial complex $L$ as above and let $\Gamma = \mathcal{R}(L)$. We can now construct the cubical complex $K_{\Gamma}$ from Section \ref{construct}.

\subsection{The Morse function $f$}

As noted in Remark \ref{keyrem}, we can view $K_{\Gamma}$ as a subcomplex of $\Theta_1\times\Theta_2\times\Theta_3$, where $\Theta_i$ is a graph that has two vertices $0, 1$ and has edges labelled by the vertices of $\Gamma$ as well as one extra edge labelled $v_0^i$, such that each edge runs from $0$ to $1$. 

We define a Morse function on the product by putting an orientation on each edge of $\Theta_i$ as follows: if it is an edge corresponding to a vertex of $S(L)\subset\Gamma$ we orient it towards $1$, while for the vertices $v^i$ and $v_0^i$ we orient towards $0$. Now put an orientation on $S^1$ and map each graph by its orientation; we then extend linearly across cubes. Restricting this map to $K_{\Gamma}$ we get a map $g$, and by lifting to universal covers we get a Morse function $f:\tilde{K}_{\Gamma}\to\RR$ which is $g_*$-equivariant. 

Let the vertices of type $i$ in $S(L)\subset\Gamma$ be the set $V_i^+$ and let $V_i^- = \{v^i, v_0^i\} =: S^0_i$. 

The ascending and descending links of this Morse function are given in Table \ref{changeme}. 

\begin{notation}
Given a simplicial complex $\Lambda$ and a subset $S$ of the vertices of $\Lambda$, $\Lambda(S)$ denotes the full subcomplex of $\Lambda$ spanned by the vertices in $S$. 
\end{notation}

\begin{table}
\begin{center}\begin{tabular}{c|c|c}Vertex & $\lk_{\uparrow}$ & $\lk_{\downarrow}$ \\
\hline
$(0,0,0)$ & $\Gamma(V_1^+\cup V_2^+\cup V_3^+) = S(L)$ & $S^2 = S^0_1\ast S^0_2\ast S^0_3$ \\
$(0,0,1)$ & $\Gamma(V_1^+\cup V_2^+)\ast S^0_3$ & $V_3^+\ast S^0_1\ast S^0_2 $ \\
$(0,1,0)$ & $\Gamma(V_1^+\cup V_3^+)\ast S^0_2$ & $V_2^+\ast S^0_1\ast S^0_3 $ \\
$(0,1,1)$ & $V_1^+\ast S^0_2\ast S^0_3 $ & $\Gamma(V_2^+\cup V_3^+)\ast S^0_1$ \\
$(1,0,0)$ & $\Gamma(V_2^+\cup V_3^+)\ast S^0_1$ & $V_1^+\ast S^0_2\ast S^0_3 $ \\
$(1,0,1)$ & $V_2^+\ast S^0_1\ast S^0_3 $ & $\Gamma(V_1^+\cup V_3^+)\ast S^0_2$ \\
$(1,1,0)$ & $V_3^+\ast S^0_1\ast S^0_2 $ & $\Gamma(V_1^+\cup V_2^+)\ast S^0_3$ \\
$(1,1,1)$ & $S^2 = S^0_1\ast S^0_2\ast S^0_3$ & $\Gamma(V_1^+\cup V_2^+\cup V_3^+) = S(L)$
\end{tabular} 
\end{center}
\caption{The ascending and descending links of the Morse function $f:K_{\Gamma}\to S^1$.}
\label{changeme}
\end{table}

\begin{prop}\label{simpcon1}
Given a complex of the form $V_i^+\ast V_j^-\ast S^0_k$ or $\Gamma(V_i^+\cup V_j^+)\ast S^0_k$ there is an ordering $v_1, v_2, \dots, v_q$ on the set $V_j^s$ such that $St(v_m)\cap(\cup_{l<m}St(v_l))$ is connected and $\pi_1(St(v_m)\cap(\cup_{l<m}St(v_l)))\neq0$ for $1<m\leq q$.
\end{prop}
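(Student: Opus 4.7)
The plan is to handle the two forms of $L$ separately, giving an explicit ordering in each case and reducing the required properties of the iterated star intersections to a calculation of joins with discrete sets. I use throughout that for non-adjacent vertices $v, w$ of a simplicial complex one has $St(v) \cap St(w) = \mathrm{Lk}(v) \cap \mathrm{Lk}(w)$, together with the identity $(A \ast Z) \cap (B \ast Z) = (A \cap B) \ast Z$.

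For the first form $V_i^+ \ast V_j^- \ast S^0_k$, I would take the ordering to be on the two-element set $V_j^- = S^0_j = \{v_1, v_2\}$, so only $m = 2$ needs checking. Because $v_1$ and $v_2$ are non-adjacent and have identical link $V_i^+ \ast S^0_k$, the intersection is the complete bipartite graph $K_{|V_i^+|,\,2}$. Since each vertex of $L$ of color $i$ contributes a pair $\{v^+, v^-\}$ to $V_i^+$ via the octahedralisation, $|V_i^+| \geq 2$, so this graph is connected and contains a $4$-cycle, giving nontrivial $\pi_1$.

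For the second form $\Delta \ast S^0_k$ with $\Delta := \Gamma(V_i^+ \cup V_j^+) = S(L_{ij})$ (the octahedralisation of the bipartite graph induced on colors $i, j$), I would order $V_j^+$. The vertices of $V_j^+$ are pairwise non-adjacent in $\Delta$, and each link has the form $\mathrm{Lk}(v_l) = N_{v_l} \ast S^0_k$ with $N_{v_l} \subseteq V_i^+$ discrete, so the distributivity identity yields
\[
St(v_m) \cap \bigcup_{l<m} St(v_l) \;=\; \Big(N_{v_m} \cap \bigcup_{l<m} N_{v_l}\Big) \ast S^0_k \;=:\; U_m \ast S^0_k \;=\; K_{|U_m|,\,2}.
\]
The proposition thus reduces to arranging $|U_m| \geq 2$ for every $m > 1$.

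To produce such an ordering I would first verify that $L_{ij}$ is connected, by the same path-deformation argument used in Lemma \ref{trivh1}: any path in $L$ traversing a type-$k$ vertex $u$ can be rerouted through $\mathrm{Lk}(u) \subseteq L_{ij}$, which is connected and bipartite on colors $i, j$ by the standing hypothesis on $L$. Given this, I would enumerate $V_j(L) = \{u_1, \dots, u_r\}$ by BFS so that each $u_t$ with $t > 1$ shares an $L_{ij}$-neighbor with some earlier $u_s$, and then order $V_j^+$ as $u_1^+, u_1^-, u_2^+, u_2^-, \dots, u_r^+, u_r^-$. For even $m = 2t$ the predecessor $u_t^+$ satisfies $N_{v_{m-1}} = N_{v_m}$, so $|U_m| \geq 2\deg_{L_{ij}}(u_t) \geq 2$ (the link-connectedness of $L$ forces $\deg_{L_{ij}}(u_t) \geq 1$); for odd $m = 2t - 1 \geq 3$ the BFS choice yields an earlier $u_s$ sharing an $L_{ij}$-neighbor $u$ with $u_t$, so that $\{u^+, u^-\} \subseteq U_m$. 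The main obstacle is extracting the connectedness of $L_{ij}$, which is essentially a reprise of the deformation idea from Lemma \ref{trivh1}; once that is in hand, the remaining combinatorics and the $K_{|U_m|,2}$ computation are routine.
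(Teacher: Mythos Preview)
Your proof is correct and follows essentially the same route as the paper's: both treat the two cases separately, both reduce the intersection $St(v_m)\cap\big(\bigcup_{l<m}St(v_l)\big)$ to a join $U_m\ast S^0_k$ with $U_m\subset V_i^+$ discrete, and both obtain $|U_m|\ge 2$ by ordering $V_j^+$ as $u_1^+,u_1^-,u_2^+,u_2^-,\dots$ where $u_1,\dots,u_r$ is an enumeration of $V_j$ coming from the connectedness of $L_{ij}$. Your version is more explicit---you spell out the BFS ordering, the distributivity of joins over unions/intersections, and the reason $\deg_{L_{ij}}(u_t)\ge 1$---but the architecture is the same as the paper's.
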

\begin{proof}
In the case $V_j^s = V_j^- = S^0_j$ we have 2 vertices $v_1$ and $v_2$ and $St(v_1)\cap St(v_2) = S^0_k\ast V_i^+$, which is connected but not simply connected. So either ordering will do. 

In the case $V_j^s = V_j^+$, let $V_j = \{w_1, \dots, w_n\}$ and $V_j^+ = \{w_1^+, w_1^-, w_2^+, \dots, w_n^-\}$. 

The subgraph $Z = L(V_i\cup V_j)$ is connected since $L$ is connected and the link of every vertex is connected and not a point. We can thus assume that $St(w_l, Z)\cap (\cup_{m<l}St(w_m))\neq \emptyset$.

Noting that $St(v^s, S(L)) = C(S(\lk(v, L)))$. We can see that $St(v^s, S(L))\cap St(w^t, S(L)) = S(St(v, L)\cap St(w, L))$. Thus ordering $V_j^+ = \{w_1^+, w_1^-, w_2^+, \dots, w_n^-\}$ we can see that $V_i^+\cap St(v_l, S(L))\cap(\cup_{m<l}St(v_m, S(L)))$ contains at least two points. 

Noting that 
$$St(v_l, S(L))\cap(\bigcup_{m<l}St(v_m, S(L))) = 
\Big(V_i^+\cap \Big(St(v_l, S(L))\Big)\cap\Big(\bigcup_{m<l}St(v_m, S(L))\Big)\Big)\ast S^0_k$$ 
we can see that this is connected but not simply connected. 
\end{proof}

\begin{rem}\label{simpcon}
In the above proof we are gluing contractible complexes along connected complexes. This shows that all the complexes in the statement of Proposition \ref{simpcon1} are simply connected.
\end{rem}

\begin{rem}\label{4cyc}
At each stage in the above proof $St(v_l, S(L))\cap(\cup_{m<l}St(v_m, S(L)))$ could be covered by cycles of length 4, since it is the join of a discrete set and a copy of $S^0$. 
\end{rem}

\subsection{Almost hyperbolisation and the Morse function $h:X\to S^1$}

We use the almost hyperbolisation technique from Section \ref{almosthyp} to get a branched cover $X$ of $K_{\Gamma}$, recall that there is a natural length preserving map $b\colon X\to K_{\Gamma}$. We define a function $h = g\circ b\colon X\to S^1$, lifting to universal covers we get a Morse function. In what follows $G$ is the fundamental group of $X$ which does not contain any copies of $\ZZ^3$. In what follows, $H := \ker(h_*:G\to\ZZ)$.

It is worth noting that in the almost hyperbolisation procedure we ensured that loops of length 4 in the link of a vertex have connected preimage.

\subsubsection{Ascending and descending links of $h$}

Recall that $K_{\Gamma}\subset \Theta_1\times\Theta_2\Theta_3$. We distinguish between 4 types of vertices in $X$ and label them as follows:

Vertices of type $A$ are those which map to $(0,0,0)$ or $(1,1,1)$.

Vertices of type $B$ are those which map to $(0,0,1)$ or $(1,0,1)$.

Vertices of type $C$ are those which map to $(1,0,0)$ or $(1,1,0)$.

Vertices of type $D$ are those which map to $(0,1,0)$ or $(0,1,1)$.

For a vertex in $X$ the ascending (descending) link is the preimage of the ascending (descending) link of the corresponding vertex in $K_{\Gamma}$.

Type $A$ vertices are disjoint from the branching locus, so a small neighbourhood of each lifts to $X$ and the ascending and descending links are isomorphic to those of the corresponding vertex of $K_{\Gamma}$. We claim that for vertices {\em not} of type $A$ the ascending and descending links are simply connected. We will prove this in the case of a vertex of type $B$ and the ascending link; the other cases are similar. 

A vertex $x$ of type $B$ is on a lift of $\Theta_1$. We may assume that $x$ maps to $(0,0,1)$. Now, $\lk_{\uparrow}(0,0,1) = \Gamma(V_1^+\cup V_2^+)\ast S^0_3$. Let us consider the preimage of $\lk_{\uparrow}(0,0,1)$ in $\lk(x, X)$. To envisage this, we start by removing the vertices in $V_1$ and taking the covering of the remaining space coming from the derivative map $b_*$ of $b$, then add back the vertices of $V_1$. By Remark \ref{simpcon} we can cover $\Gamma(V_1^+\cup V_2^+)\ast S^0_3$ by $St(v)$  as $v$ runs over vertices in $V_1^+$. We noted in Remark \ref{4cyc} that we can construct this cover in such a way that each $St(v_m)\cap(\cup_{l<m}St(v_l))$ is connected and covered by loops of length 4 with non-empty intersection. Specifically, if $S^1_4$ and $S^1_{4'}$ are 2 loops of length 4 in $St(v_m)\cap(\cup_{l<m}St(v_l))$, then $S^1_4\cap S^1_{4'}\supset S^0_3$. 

In the procedure of passing to the branched covering $b:X\to K_{\Gamma}$. Associated to each vertex of $X$ there is a derivative map $b_{\lk(v)}:\lk(v, X)\to \lk(b(v), K_{\Gamma})$. Each of the 4 cycles above has connected preimage under the map $b_{\lk(v)}$, therefore the preimage of $St(v_m)\cap(\cup_{l<m}St(v_l))$ is connected. Upon taking the completion, we replace the vertices in $V_1^+$; this corresponds to coning off the lifts of their links. Thus we see $\lk_{\uparrow}(x)$ is made from a sequence of contractible spaces glued along connected subspaces, and so is simply connected.

\subsection{Proof that the kernel is not finitely presented}

To prove that $H$ is not finitely presented we need the following lemma.

\begin{lem}\label{nontriv}
$\pi_1(X_I)\neq 0$ for all compact intervals $I\subset \RR$
\end{lem}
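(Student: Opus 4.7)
The plan is to show $\pi_1(X_I)\neq 0$ by constructing, for each compact $I$, a continuous map $\Psi_I \colon X_I \to L$ that detects a specific non-trivial loop. Since $\pi_1(L)$ is non-trivial by hypothesis (it is perfect and non-trivial by construction of $L$), finding a loop whose image under $\Psi_I$ is a non-trivial element of $\pi_1(L)$ suffices.

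For the loop, I would exploit the type $A$ vertices: recall that at a type $A$ vertex $v_A\in X$ the ascending (or descending) link is unchanged by branching and thus isomorphic to $S(L)$. Choose a type $A$ vertex $v_A \in X_I$ lying near one end of $I$ (say, near $\max I$, with $\lk_\uparrow(v_A)\cong S(L)$), and a loop $\gamma \subset \lk_\uparrow(v_A)\cong S(L)$ whose image under the retraction $r\colon S(L)\to L$ is a non-trivial element of $\pi_1(L)$. Such $\gamma$ exists because $r$ induces a surjection on $\pi_1$, and $\pi_1(L) \neq 0$. The loop $\gamma$ is realized inside $X_I$ by sitting in a scaled copy of $\lk_\uparrow(v_A)$ just above $v_A$.

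The map $\Psi_I$ is built inductively using Bestvina--Brady Morse theory. Starting from a neighborhood of $v_A$, define $\Psi_I$ as $r$ on $\lk_\uparrow(v_A)$ (extended to the adjacent cone on the simply connected $\lk_\downarrow(v_A)\cong S^2$ by mapping the cone to a basepoint). Then extend over the rest of $X_I$ by growing the interval through successive critical levels and applying Theorem \ref{homoeq}. At each non-type-$A$ critical vertex, the ascending and descending links are simply connected (established earlier in this section), so $\Psi_I$ extends uniquely up to homotopy over the cone. At each type-$A$ critical vertex $w\neq v_A$ whose $S(L)$-link is being coned off, extend $\Psi_I$ by choosing its restriction to that particular $S(L)$-copy to be null-homotopic in $L$ (which is possible at this stage because we are only required to fill in a cone on a piece where we have freedom of choice). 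The choice of $v_A$ near the extreme of $I$ ensures that the ascending link at $v_A$ itself is never coned off in this growth, so $\Psi_I|_{\lk_\uparrow(v_A)}$ remains equal to $r$, and $[\Psi_I\circ\gamma] = [r\circ\gamma]\neq 0$ in $\pi_1(L)$.

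The main obstacle is ensuring the local choices of $\Psi_I$ on the various $S(L)$-copies at distinct type-$A$ vertices are globally compatible. Distinct type-$A$ vertices contribute $S(L)$-subcomplexes that intersect nontrivially in $X_I$ (through cubes and identified faces), and the null-homotopies chosen at one vertex must match the retraction behavior at $v_A$. The key fact that saves us is precisely the branched-cover construction of Section~\ref{almosthyp}: the preimages of length-$4$ loops in links are connected, and the branching was arranged so that adjacent $S(L)$-copies are ``merged'' coherently. This controls how the $S(L)$-pieces glue, allowing the desired choice of $\Psi_I$ to be made consistently. Verifying this compatibility in detail is where most of the technical work sits.
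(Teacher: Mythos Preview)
Your approach has a basic flaw at the very start. You place $v_A\in X_I$ and take $\gamma$ in a scaled copy of $\lk_\uparrow(v_A)$ sitting just above $v_A$. But then the radial cone from that scaled link down to $v_A$ lies entirely in $X_I$, so $\gamma$ is null-homotopic in $X_I$ by construction. Equivalently, no continuous map $\Psi_I\colon X_I\to L$ can restrict to the retraction $r$ on $\lk_\uparrow(v_A)$: since $\lk_\uparrow(v_A)$ bounds a cone inside $X_I$, its image under any $\Psi_I$ must be null-homotopic in $L$, whereas $r$ is not. So the induction cannot even get off the ground with the initial data you specify.

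Even if you try to repair this by placing $v_A$ just \emph{outside} $I$ (so that the relevant link sits in $X_I$ but the cone point does not), the inductive extension of $\Psi_I$ still fails. When you grow the interval and attach a cone on an $S(L)$-shaped link at some other type~$A$ vertex $w$, the restriction $\Psi_I|_{\lk(w)}$ is already determined by the map you have built so far; you are not free to ``choose its restriction to that particular $S(L)$-copy to be null-homotopic''. Nothing in the branched-cover construction forces those restrictions to be null-homotopic in $L$, and the compatibility discussion you sketch at the end does not address this obstruction.

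The paper's argument is entirely different and avoids constructing any map to $L$. It works in $Y=\widetilde{K_\Gamma}$, uses Theorem~\ref{buxthm} to see that $Y_I$ is not simply connected, and then locates a non-trivial loop $\hat\gamma$ in some $Y_J$ lying inside a single ascending link $\lk_\uparrow(x_m)\cong S(L)$ at a vertex over $(0,0,0)$. The point is that such a loop bounds a disc in $Y$ (the cone to $x_m$) which misses the branching locus, so the disc lifts to $\tilde X$. The boundary of the lift is a loop $\mu$ in $X_J$; were $\mu$ trivial there, pushing the filling back down via $\tilde b$ would trivialise $\hat\gamma$ in $Y_J$, a contradiction. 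Finally $\pi_1(X_I)\twoheadrightarrow\pi_1(X_J)$ gives $\pi_1(X_I)\neq 0$. The comparison with $Y$ via the branched covering map, rather than a hypothetical map to $L$, is the key idea you are missing.
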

\begin{proof}
For the purposes of this proof let $Y = \widetilde{K_{\Gamma}}$. In our case $\Gamma^{\dagger} = S^2$ and $\Gamma^{\ast} = \mathcal{R}(L)$. By Theorem \ref{buxthm} we know that the kernel of $g_*$ is not finitely presented. Since the kernel of $g_*$ acts cocompactly on the level set $Y_I$ we can see that $Y_I$ is not simply connected. 

Let $\gamma:S^1\to Y_I$ be a non-trivial loop. Then there is a larger interval $J'$ such that $\gamma$ is trivial in $Y_{J'}$. Let $J' = [a,b]$. We can assume that $\gamma$ is non-trivial in $J = [a+\epsilon, b]$ or $J = [a, b-\epsilon]$ for all $\epsilon>0$; assume the latter. There is a sequence of vertices $v_1, v_2, \dots$ such that $Y_{J'} = Y_{J}\cup C(\lk_{\uparrow}(v_i))$. There is an integer $m$ such that $\gamma$ is trivial in $Y_{J}\cup_{i\leq m} C(\lk_{\uparrow}(v_i))$ but not trivial in $Y_{J}\cup_{i< m} C(\lk_{\uparrow}(v_i)) = \widehat{Y_{J}}$. 

Since adding $C(\lk_{\uparrow}(x_m))$ changes the fundamental group ($\gamma$ becomes trivial), we can find a non-trivial loop $\hat{\gamma}$ in  $\widehat{Y_{J}}$ which is also contained in $\lk_{\uparrow}(x_m)$. We also know that it is contained in ${Y_{J}}$ since $\lk_{\uparrow}(v_i)\cap \lk_{\uparrow}(v_j) = \emptyset$ whenever $i\neq j$, as the restriction of an affine map to a cube can only have one maximum and one minimum or it is constant on a subcube. 

Since adding $C(\lk_{\uparrow}(x_m))$ changes $\pi_1$, we see that $\pi_1(\lk_{\uparrow}(x_m))\neq 0$, so $x_m$ is a vertex mapping to $(0,0,0)$ and $\hat{\gamma}$ bounds a disc in $Y$ which does not intersect $\tilde{L}$. It follows that this disc lifts to $\tilde{X}$ under the branched covering $\tilde{b}$ coming from the following commutative diagram:

\begin{figure}[h!]
\center
\begin{tikzcd}
  \tilde{X} \arrow[r, "\tilde{b}"] \arrow[d]
    & Y \arrow[d, ] \\
  X \arrow[r, "b" ]
&K_{\Gamma} 
\end{tikzcd}
\end{figure}

The boundary of this lifted disc is in $X_J$; call this loop $\mu$. If $\mu$ bounded a disc in $X_J$, then we could map this to $Y_{J}$ via $\tilde{b}$, but this would imply that $\tilde{b}(\mu) = \hat{\gamma}$ bounds a disc in $Y_{J}$, which it does not. Thus $\mu$ is non-trivial in $\pi_1(X_J)\neq 1$. Since $\pi_1(X_I)\rightarrow\pi_1(X_J)$ is surjective we deduce by Theorem \ref{bbmorse} that $\pi_1(X_I)\neq 1$. 
\end{proof}

\begin{thm}
$H = \ker(g_*)$ is not finitely presentable.
\end{thm}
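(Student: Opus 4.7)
The plan is to derive the non-finite-presentability of $H$ from Lemma~\ref{nontriv} via the Bestvina--Brady $F_2$-criterion. The first step is to verify that for any compact intervals $I \subset J$ of $\RR$, the inclusion $X_I \hookrightarrow X_J$ is surjective on $\pi_1$. By Corollary~\ref{cor1}(2) it suffices that every ascending and every descending link of the Morse function $f$ be connected. For vertices of $\tilde{X}$ not of type $A$, the links were already shown to be simply connected. For a type $A$ vertex the ascending link is $S(L)$ and the descending link is $S^2$; the sphere is obviously connected, and $S(L)$ is connected because $L$ is connected (any two vertices of $S(L)$ lying over $v, w \in L$ can be joined via the four-cycles $S^0_{v_i} \ast S^0_{v_{i+1}}$ sitting over the successive edges of a path in $L^{(1)}$ from $v$ to $w$). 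Hence every inclusion $X_I \hookrightarrow X_J$ induces a surjection on $\pi_1$.

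Combining this with Lemma~\ref{nontriv}, for every compact $I$ and every compact $J \supset I$ the surjection $\pi_1(X_I) \twoheadrightarrow \pi_1(X_J)$ has non-trivial target and is therefore itself non-trivial. In the language of Bestvina--Brady, the direct system $\{\pi_1(X_I)\}$ of fundamental groups of level sets is not \emph{essentially trivial}: there is no compact $I$ and compact $J \supset I$ for which every loop in $X_I$ dies in $X_J$.

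The final step invokes the standard converse of Theorem~\ref{bbmorse} in the Bestvina--Brady framework: $H$ is of type $F_2$ if and only if the system $\{\pi_1(X_I)\}$ is essentially trivial in the above sense. One proves the ``only if'' direction by realising a finite presentation $2$-complex $Y$ for $H$ via a cocompact $H$-equivariant cellular map $\tilde{Y} \to \tilde{X}$ whose image lies in some $X_{J_0}$; equivariantly filling in the disks that bound the relator loops then shows that any loop in a fixed $X_I$ is null-homotopic in some $X_{J'} \supset X_{J_0}$. Applying this to our situation yields a contradiction with the previous paragraph, so $H$ cannot be finitely presented.

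I expect the main subtlety to lie precisely in this last invocation, since Theorem~\ref{bbmorse} as stated in the excerpt gives only the ``if'' direction; the ``only if'' converse is standard in the Bestvina--Brady theory but must be carefully transferred to the branched-cover setting, where $\tilde{X}$ is still CAT($0$) and carries a free cocompact $G$-action, so $H$ acts freely and cocompactly on each level set. The remaining ingredients --- surjectivity of $\pi_1$ from connected links, and non-triviality of $\pi_1(X_J)$ from Lemma~\ref{nontriv} --- then combine immediately to produce the contradiction.
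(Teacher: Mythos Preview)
Your proposal is correct and follows essentially the same route as the paper: surjectivity of $\pi_1(X_I)\to\pi_1(X_J)$ from connectedness of all ascending and descending links, combined with Lemma~\ref{nontriv}, forces the direct system $\{\pi_1(X_I)\}$ to be non-essentially-trivial, which is incompatible with finite presentability of $H$.

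The only difference is in how the ``only if'' direction is argued. The paper does not invoke a general converse to Theorem~\ref{bbmorse} via a map $\tilde Y\to\tilde X$ from a presentation complex; instead it works intrinsically with the free cocompact $H$-action on $X_I$: since $X_I/H$ becomes a $K(H,1)$ after attaching finitely many $2$-cells, $\pi_1(X_I)$ is normally generated by finitely many $H$-orbits of loops, and as $\tilde X$ is contractible these representatives die in some larger $X_J$, whence $\pi_1(X_I)\to\pi_1(X_J)$ is the zero map. This is cleaner than your sketch (which is a little vague about why a loop in an \emph{arbitrary} $X_I$ gets killed by the image of $\tilde Y$), and it avoids having to justify the existence and properties of an $H$-equivariant map $\tilde Y\to\tilde X$. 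But the logical skeleton---finite presentation $\Rightarrow$ essentially trivial system $\Rightarrow$ some $\pi_1(X_J)=1$ $\Rightarrow$ contradiction with Lemma~\ref{nontriv}---is identical.
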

\begin{proof}
Assume that $H$ is finitely presented. 

$H$ acts cocompactly on $X_I$ so we can add finitely many 2-cells to the quotient to gain fundamental group $H$. Taking a universal cover of the space obtained in this way we arrive at $X_I$ with finitely many $H$-orbits of $2$-cells attached, which is simply connected. In other words, there are finitely many $H$-orbits of loops which generate $\pi_1(X_I)$. 

The direct limit of  all the $X_I$ is the space $X$ which is CAT(0) and in particular contractible. We can pass to a larger interval such that the $H$-orbits of loops which generate $\pi_1(X_I)$ are trivial. In other words, the map $\pi_1(X_I)\to\pi_1(X_J)$ is trivial. But it is also surjective by Theorem \ref{bbmorse} and because we have assumed that all the ascending and descending links are connected. Thus $\pi_1(X_J)$ is trivial, however we know this not to be the case by Lemma \ref{nontriv}. 
\end{proof}

\bibliographystyle{plain}
\bibliography{bibshort,otherref}

\end{document}